\theoremstyle{plain}
\newtheorem{thm}{Theorem}[section]
\newtheorem{lemma}[thm]{Lemma}
\newtheorem{corollary}[thm]{Corollary}
\newtheorem{prop}[thm]{Proposition}
\newtoks\prt
\theoremstyle{definition}
\newtheorem{remark}[thm]{Remark}
\newtheorem{definition}[thm]{Definition}
\def\eqn#1$$#2$${\begin{equation}\label#1#2
\end{equation}}
\newcommand{\abs}[1]{\lvert#1\rvert}
\newcommand{\norm}[1]{\lVert#1\rVert}
\numberwithin{equation}{section}
\def\diam{\operatorname{diam}}
\def\dist{\operatorname{dist}}
\def\cR{\mathcal{R}}
\def\G{\mathcal{G}}
\def\H{\mathcal{H}}
\def\phi{\varphi}
\def\epsilon{\varepsilon}
\def\eps{\varepsilon}
\def\en{\mathbb N}
\def\er{\mathbb R}
\def\R{\mathbb R}
\newtoks\by
\newtoks\paper
\newtoks\book
\newtoks\jour
\newtoks\yr
\newtoks\pages
\newtoks\vol
\newtoks\publ
\def\ota{{\hbox\vol{???}}}
\def\cLear{\by=\ota\paper=\ota\book=\ota\jour=\ota\yr=\ota
\pages=\ota\vol=\ota\publ=\ota}
\def\endpaper{\the\by, {\the\paper},
\textit{\the\jour} \textbf{\the\vol} (\the\yr), \the\pages.\cLear}
\def\endbook{\the\by, \textit{\the\book}, \the\publ.\cLear}
\def\endprep{\the\by, \textit{\the\paper}, \the\jour.\cLear}
\def\endyearprep{\the\by, \textit{\the\paper}, \the\jour, (\the\yr).\cLear}
\def\name#1#2{#2 #1}
\def\de0#1{\rule[3pt]{#1}{0.4pt} \hspace{-0.1pt} \rule[3.05pt]{0.05pt}{0.4pt} \hspace{-0.1pt} \rule[3.1pt]{0.05pt}{0.4pt} \hspace{-0.1pt} \rule[3.15pt]{0.05pt}{0.4pt} \hspace{-0.1pt} \rule[3.2pt]{0.05pt}{0.4pt} \hspace{-0.1pt} \rule[3.25pt]{0.05pt}{0.4pt} \hspace{-0.1pt} \rule[3.3pt]{0.05pt}{0.4pt} \hspace{-0.1pt} \rule[3.35pt]{0.05pt}{0.4pt} \hspace{-0.1pt} \rule[3.4pt]{0.05pt}{0.4pt} \hspace{-0.1pt} \rule[3.45pt]{0.05pt}{0.4pt} \hspace{-0.1pt} \rule[3.5pt]{0.05pt}{0.4pt} \hspace{-0.1pt} \rule[3.55pt]{0.05pt}{0.4pt} \hspace{-0.1pt} \rule[3.6pt]{0.05pt}{0.4pt} \hspace{-0.1pt} \rule[3.65pt]{0.05pt}{0.4pt} \hspace{-0.1pt} \rule[3.7pt]{0.05pt}{0.4pt} \hspace{-0.1pt} \rule[3.75pt]{0.05pt}{0.4pt} \hspace{-0.1pt} \rule[3.8pt]{0.05pt}{0.4pt} \hspace{-0.1pt} \rule[3.85pt]{0.05pt}{0.4pt} \hspace{-0.1pt} \rule[3.9pt]{0.05pt}{0.4pt} \hspace{-0.1pt} \rule[3.95pt]{0.05pt}{0.4pt} \hspace{-0.1pt} \rule[4.0pt]{0.05pt}{0.4pt} \hspace{-0.1pt} \rule[4.05pt]{0.05pt}{0.4pt} \hspace{-0.1pt} \rule[4.1pt]{0.05pt}{0.4pt} \hspace{-0.1pt} \rule[4.15pt]{0.05pt}{0.4pt} \hspace{-0.1pt} \rule[4.2pt]{0.05pt}{0.4pt} \hspace{-0.1pt} \rule[4.25pt]{0.05pt}{0.4pt} \hspace{-0.1pt} \rule[4.3pt]{0.05pt}{0.4pt} \hspace{-0.1pt} \rule[4.35pt]{0.05pt}{0.4pt} \hspace{-0.1pt} \rule[4.4pt]{0.05pt}{0.4pt} \hspace{-0.1pt} \rule[4.45pt]{0.05pt}{0.4pt} \hspace{-0.1pt} \rule[4.5pt]{0.05pt}{0.4pt} \hspace{-0.1pt} \rule[4.55pt]{0.05pt}{0.4pt} \hspace{-0.1pt} \rule[4.6pt]{0.05pt}{0.4pt} \hspace{-0.1pt} \rule[4.65pt]{0.05pt}{0.4pt} \hspace{-0.1pt} \rule[4.7pt]{0.05pt}{0.4pt} \hspace{-0.1pt} \rule[4.75pt]{0.05pt}{0.4pt} \hspace{-0.1pt} \rule[4.8pt]{0.05pt}{0.4pt} \hspace{-0.1pt} \rule[4.85pt]{0.05pt}{0.4pt} \hspace{-0.1pt} \rule[4.9pt]{0.05pt}{0.4pt} \hspace{-0.1pt} \rule[4.95pt]{0.05pt}{0.4pt} \hspace{-0.1pt} \rule[5.0pt]{0.05pt}{0.4pt} \hspace{-0.1pt} \rule[5.05pt]{0.05pt}{0.4pt} \hspace{-0.1pt} \rule[5.1pt]{0.05pt}{0.4pt} \hspace{-0.1pt} \rule[5.15pt]{0.05pt}{0.4pt} \hspace{-0.1pt} \rule[5.2pt]{0.05pt}{0.4pt} \hspace{-0.1pt} \rule[5.25pt]{0.05pt}{0.4pt} \hspace{-0.1pt} \rule[5.3pt]{0.05pt}{0.4pt} \hspace{-0.1pt} \rule[5.35pt]{0.05pt}{0.4pt} \hspace{-0.1pt} \rule[5.4pt]{0.05pt}{0.4pt} \hspace{-0.1pt} \rule[5.45pt]{0.05pt}{0.4pt} \hspace{-0.1pt} \rule[5.5pt]{0.05pt}{0.4pt} \hspace{-0.1pt} \rule[5.55pt]{0.05pt}{0.4pt} \hspace{-0.1pt} \rule[5.6pt]{0.05pt}{0.4pt} \hspace{-0.1pt} \rule[5.65pt]{0.05pt}{0.4pt} \hspace{-0.1pt} \rule[5.7pt]{0.05pt}{0.4pt} \hspace{-0.1pt} \rule[5.75pt]{0.05pt}{0.4pt} \hspace{-0.1pt} \rule[5.8pt]{0.05pt}{0.4pt} \hspace{-0.1pt} \rule[5.85pt]{0.05pt}{0.4pt} \hspace{-0.1pt} \rule[5.9pt]{0.05pt}{0.4pt} \hspace{-0.1pt} \rule[5.95pt]{0.05pt}{0.4pt} \hspace{-0.1pt} \rule[6.0pt]{0.05pt}{0.4pt}}	
\newcommand{\labeltext}[2]{%
	\@bsphack
	\def\@currentlabel{#1}{\label{#2}}%
	\@esphack
}
\def\step#1#2#3{\par \noindent{{\\ \bf Step~\labeltext{#1}{#3}#1. }{\bf #2. }}}
\begin{document}

\title[Classification of strict limits of planar BV homeomorphisms]{Classification of strict limits of planar BV homeomorphisms}

\author[D. Campbell]{Daniel Campbell}
\address{D.~Campbell: Department of Mathematics, University of Hradec Kr\' alov\' e, Rokitansk\'eho 62, 500 03 Hradec Kr\'alov\'e, Czech Republic} 
\address{Faculty of Economics, University of South Bohemia, Studentsk\' a 13, Cesk\' e Budejovice, Czech Republic}
\email{daniel.campbell@uhk.cz}

\email{}\author[A. Kauranen]{Aapo Kauranen}
\address{A.~Kauranen: Department of Mathematics and Statistics, University of Jyv\"askyl\"a, PL 35, 40014 Jyv\"askyl\"an yliopisto, Finland}
\email{aapo.p.kauranen@jyu.fi}

\author[E. Radici]{Emanuela Radici}
\address{E.~Radici: Institute of Mathematics, EPFL, CH-1015 Lausanne, Switzerland}
\email{emanuela.radici@epfl.ch}

\thanks{The first author was supported by the grant GACR 20-19018Y. The second author was supported by the Academy of Finland (project number 322441).}

\subjclass[2010]{Primary 46E35; Secondary 30E10, 58E20}
\keywords{No-crossing condition, homeomorphisms, BV mappings, Strict closure}

\begin{abstract}
	We present a classification of strict limits of planar BV homeomorphisms. The authors and S. Hencl showed in a previous work \cite{CHKR} that such  mappings allow for cavitations and fractures singularities but fulfill a suitable generalization of the INV condition. As pointed out by J. Ball \cite{B}, these features are physically expected by limit configurations of elastic deformations. In the present work we develop a suitable generalization of the \emph{no-crossing} condition introduced by De Philippis and Pratelli in \cite{PP} to describe weak limits of planar Sobolev homeomorphisms that we call \emph{BV no-crossing} condition, and we show that a planar mapping satisfies this property if and only if it can be approximated strictly by homeomorphisms of bounded variations. 
\end{abstract} 

\maketitle

\section{Introduction}

In the recent years the problem of classifying the class of weak or strong limits of Sobolev diffeomorphisms gained a lot of attention due to its relvance in nonlinear elasticity and geometric function theory. 
Thanks to the pioneering work of Iwaniec and Onninen \cite{IO} and the more recent result of De Philippis and Pratelli \cite{PP}, the Sobolev classification in the planar setting is now well understood. 

More precisely, through the Sobolev diffeomorphic approximation result obtained in \cite{IKO1,IKO2} for $p > 1$, the authors of \cite{IO} show that the weak closure of $W^{1,p}$ homeomorphisms for $p\geq 2$ coincides with the respective strong closure of diffeomorphisms and the limit set is characterized by monotone Sobolev mappings. 

Relying on a different technique introduced in \cite{HP} for the diffeomorphic approximation of $W^{1,1}$ homeomorphisms, the authors of \cite{PP} can prove that the weak closure of $W^{1,p}$ homeomorphisms still coincides with the strong closure of diffeomorphisms for all $1 \leq p <\infty$. 
In this case the limit mappings may present discontinuities, moreover, monotonicity turns out to be too weak to describe the weak limits unless some more restrictive condition on the Jacobian is assumed. Thus, the authors need to introduce the new concept of \emph{no-crossing} condition. 
Intuitively speaking, a Sobolev mapping can be obtained as a limit of homeomorphisms if and only if the restriction of the map to almost any grid inside the domain can always be injectified while remaining uniformly close to the original map. This condition is flexible enough to be stated for mappings allowing for a $\mathcal{H}^1$-negligible set of discontinuities, as it is proved to be the case for $W^{1,p}$ limits when $p$ is smaller than the dimension of the domain. Indeed, as explicitly shown in \cite{PP}, weak limits of planar Sobolev homeomorphisms can present cavitations. 

A natural question would then be to consider the closure of planar homeomorphisms in the $BV$ setting so to include more complicated  discontinuities in the limit class. 

On the other hand, in his pioneering works \cite{B1,B2} Ball studied continuity and invertibility properties of mappings which can serve as energy minimizing deformations in elasticity theory. Being an elastic deformation a reversible shape change of the material, it is appropriate to describe elastic deformations as the class of homeomorphisms which map a reference configuartion onto a target configuration, eventually with prescribed boundary conditions. When the minimization of standard energy functionals does not admit solutions within the class of homeomorphisms, one is led to consider suitable relaxations of the problem in classes that still model the expected behaviour of elastic defomrations, namely the non interpenetration of the material. 

In this spirit, the INV condition introduced by M\" uller and Spector in \cite{MS} describes mappings $f: \er^n \to \er^n$ for which, loosely speaking, the image of $f(B(x,r))$ lies inside $f(\partial B(x,r))$ and the image of $f(\er^n \setminus B(x,r))$ remains outside $f(\partial B(x,r))$. Let us remark that singularities as cavitations (see \cite{GL}, Figure 4 for physical observation of cavitations) fulfill the INV condition, moreover, in the planar case, Sobolev weak limits of diffeomorphisms are always INV mappings. Since in many relevant situations a deformed material may break (see \cite{GL}, Figure 4 for physical observation of fractures), Ball proposed to generalize the mathematical model so to allow both for cavitations and fracture singularities.

A possible approach is to introduce energy functionals with an extra term accounting for the energy of the surface created by the deformations. Henao and Mora Corral study energies of this form in a sequence of works \cite{HMC1, HMC2, HMC3, HMC4} and show that the minimizers are one-to-one almost everywhere and can exhibit fractures.

An alternative approach was proposed by the authors and Hencl in \cite{CHKR}, motivated by the intereseting $BV$ energy relaxation results obtained by Kristensen and Rindler \cite{KR} and Rindler and Shaw \cite{RS} for fixed boundary conditions, see also \cite{BKK} for the Neumann case, and by the diffeomorphic approximation results for planar homeomorphims of bounded variations studied by the third author and Pratelli in \cite{PR1} and \cite{PR2}. The weak $BV$ topologies considered in the above mentioned works are the \emph{strict} and \emph{area-strict} ones. 
A sequence $f_k: \Omega \to \er^n$ of $BV$ functions is \emph{strictly} converging to $f \in BV(\Omega, \er^n)$ if $f_k \to f$ in $L^1(\Omega, \er^n)$ and $|Df_k|(\Omega) \to |Df|(\Omega)$; and is \emph{area-strictly} converging if, in addition, it is possible to decompose $Df_k$ as the sum of two measures $\mu_k + \nu_k$ such that $|\mu_k - D^a f|(\Omega) \to 0$ and $|\nu_k|(\Omega) \to |D^s f|(\Omega)$, where $D^a f$ and $D^s f$ denote the absolutely and the singular part of $Df$ respectively. Clearly, area-strict convergence implies the strict one. 

 In \cite{CHKR} we show that strict limits of planar $BV$ homeomorphisms can have fractures but still preserve a sort of monotonicity property. The INV condition cannot be formulated for such complicated singularities, however, we introduce a careful generalization of the concept of topological image via multifunctions and we are able to show that strict limits map disjoint sets onto essentially disjoint sets. Thus the strict (and the stronger area-strict) topology seems to be appropriate to describe $BV$ relaxations which are physically relevant in elasticity. 
 
The present manuscript complements the results of \cite{CHKR}. Indeed, in \cite{CHKR} the authors and Hencl study the basic properties of strict limits of planar $BV$ homeomorphisms and classify the admissible singularities but no full characterization of the limit class is provided. In the present work we introduce a suitable $BV$ generalization of the concept of \emph{no-crossing} condition introduced in \cite{PP} that we call \emph{no-crossing-BV} (NCBV) condition (see the precise Defintion \ref{DefNCBV}) and we show that it is flexible enough to characterize the class of strict limits of planar $BV$ homeomorphisms. 
Roughly speaking, a map of bounded variation satisfies the NCBV condition if any reasonable parametrization of the topological image of a grid can be injectified remaining remaining close in the uniform sense.
 
Our main result is the following:

\begin{thm}\label{main}
Let $f \in BV(Q(0,1), \er^2)$ be a planar $BV$ map that satisfies the NCBV condition and coincides with the identity on $\partial Q(0,1)$ then there exists a sequence $(f_k)_k \subset BV(Q(0,1), \er^2)$ of homeomorphisms extending the identity on $\partial Q(0,1)$ such that 
\begin{equation}\label{Manhattan strict convergence}
(|D_1f_k| + |D_2f_k|)(Q(0,1)) \longrightarrow ( |D_1f|+ |D_2f|)(Q(0,1)) \quad \mbox{ as $k \to \infty$.}
\end{equation}
Also, if $(f_k)_k \subset BV(Q(0,1), \er^2)$ is a sequence of planar BV homeomorphisms with $f_k(x) = x$ on $\partial Q(0,1)$ and satisfying \eqref{Manhattan strict convergence} for some $f \in BV(Q(0,1), \er^2)$, then the map $f$ satisfies the NCBV condition.
	\end{thm}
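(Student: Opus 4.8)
The plan is to prove the theorem in two parts, matching its two assertions: (i) every NCBV map with identity boundary values on $Q(0,1)$ is a ``Manhattan-strict'' limit of $BV$ homeomorphisms, and (ii) conversely, any such limit satisfies NCBV. I will start with part (ii), the necessity, which should be the more transparent direction. Suppose $(f_k)_k$ are $BV$ homeomorphisms equal to the identity on $\partial Q(0,1)$ with $(|D_1f_k|+|D_2f_k|)(Q(0,1))\to(|D_1f|+|D_2f|)(Q(0,1))$. Manhattan-strict convergence controls the horizontal and vertical total variations separately, hence by a Fubini/slicing argument the restrictions of $f_k$ to almost every horizontal and vertical line converge in $BV$ of that line, and in particular (after passing to a subsequence) in $L^1$ and pointwise a.e.\ on a.e.\ such line, with the one-dimensional variations converging as well. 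Since each $f_k$ is a homeomorphism, its restriction to any segment of the grid is already injective and uniformly close to itself; taking $k\to\infty$ along the good slices produces, for a.e.\ grid, a sequence of injective curves converging uniformly to the corresponding parametrization of the topological image of that grid under $f$. This is precisely what the NCBV condition requires, so $f$ satisfies NCBV. The care needed here is purely bookkeeping: one must quote the structure theorem for strict limits of $BV$ homeomorphisms from \cite{CHKR} to know what ``the topological image of the grid under $f$'' means, and align the definition of the injectified parametrization in Definition \ref{DefNCBV} with the uniform limit of the $f_k$ restricted to grids.

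For part (i), the sufficiency, the strategy is the standard two-step scheme used in \cite{PP,PR1,PR2}: first approximate $f$ on a fine grid by a finitely-piecewise-affine (or piecewise-bi-Lipschitz) map $g_\delta$ that agrees with $f$ on the grid vertices, is uniformly close to $f$, and has Manhattan variation close to that of $f$; then perturb $g_\delta$ into an actual homeomorphism using the NCBV hypothesis. The NCBV condition is what makes the second step possible: on each grid square the values of $g_\delta$ may ``cross'', but NCBV guarantees the existence of an injective replacement of the boundary parametrization staying uniformly close, and then one fills in the square by a bi-Lipschitz (hence $BV$) homeomorphism of the square onto the Jordan domain bounded by the injectified curve, controlling its Manhattan variation by the length of that curve. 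Gluing these square-by-square homeomorphisms along the grid — where they all agree with the injectified grid image — yields a global homeomorphism $f_k$ of $Q(0,1)$, equal to the identity on $\partial Q(0,1)$ since $f$ is. Letting the grid mesh $\to 0$ and choosing the uniform-closeness and variation parameters to shrink gives \eqref{Manhattan strict convergence}; lower semicontinuity of the total variation under $L^1$ convergence forces the $\liminf$ to be at least $(|D_1f|+|D_2f|)(Q(0,1))$, so the construction only needs to achieve the matching upper bound.

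The main obstacle will be the filling-in step in part (i): converting the purely topological ``injectified curve close to $f$ on the grid'' provided by NCBV into a $BV$ homeomorphism of the square whose \emph{Manhattan} variation (not merely its total variation) is quantitatively controlled by the data, and doing so consistently across adjacent squares so the pieces glue to a homeomorphism. In the Sobolev setting of \cite{PP} this is handled by a delicate filling lemma; here one must redo it tracking $|D_1\cdot|+|D_2\cdot|$ rather than $|D\cdot|$, which is slightly fragile because the Manhattan norm is not rotation invariant, so intermediate rotations or shears that are harmless in the isotropic case must be avoided or paid for. I expect this to require a dedicated lemma: given a Jordan curve $\gamma$ of finite length bounding a domain $V$, and a bi-Lipschitz parametrization of $\partial Q$ onto $\gamma$, construct a homeomorphism $Q\to\overline V$ extending it with $(|D_1|+|D_2|)$-variation bounded by $C\,\mathcal H^1(\gamma)+C\operatorname{diam}(V)$, with $C$ absolute. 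A second, more technical obstacle is the passage to good slices in part (ii): Manhattan-strict convergence only gives convergence of $|D_1f_k|(Q)+|D_2f_k|(Q)$, and one must argue that the $L^1$ convergence of $f_k\to f$ together with this combined variation bound upgrades, via Fubini and a diagonal argument, to simultaneous good behaviour on a.e.\ horizontal \emph{and} a.e.\ vertical line of a.e.\ grid — the subtlety being that the exceptional sets for the two families of slices must be made simultaneously null.
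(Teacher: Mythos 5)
Your high-level scheme (inject on a fine grid via the NCBV hypothesis, then fill rectangles and glue) matches the paper's, but both directions of your sketch have genuine gaps.

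For the necessity direction, you assert that $L^1$ convergence of $f_k$ on a.e.\ line, together with convergence of one-dimensional variations and the injectivity of each $f_k$, yields \emph{uniform} convergence of the constant-speed parametrizations of $f_k$ on grid segments to the geometric representative $\gamma$ of $f$. This does not follow. Near a jump of $f$, the parametrized image of $f_k$ can stray far from the straight segment by which $\gamma$ interpolates the jump, even though $f_k\to f$ in $L^1$ and the one-dimensional variations converge. The paper addresses this by splitting the jumps of $f$ on the grid into finitely many ``big'' jumps (size $\geq\sigma/5$) and the rest, isolating the big ones in small disjoint subsegments chosen so that $f$ is continuous at their endpoints, using Lemma~\ref{Owens} to get strict convergence on those subsegments, and then invoking the geometric Lemma~\ref{Inevitable} — a quantitative statement that a path of almost-minimal length between nearby endpoints stays close to the straight segment — to control the parametrization across each big jump. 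Without this step, ``injective curves converging uniformly to the parametrization of the topological image'' is exactly what still needs to be proved.

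For the sufficiency direction, the filling lemma you propose — a homeomorphic extension with Manhattan variation bounded by $C\,\mathcal{H}^1(\gamma)+C\operatorname{diam}(V)$ — has the wrong scaling and cannot close the argument. Summed over the rectangles of a mesh-$\eta$ grid, the boundary lengths $\sum_R \mathcal{H}^1(\gamma_\sigma(\partial R))$ are of order $\eta^{-1}\cdot|D_\tau f|$, so a bound linear in boundary length blows up as the mesh is refined and cannot yield $(|D_1h|+|D_2h|)(Q)\leq(|D_1f|+|D_2f|)(Q)+\epsilon$. The paper instead invokes Theorem~\ref{MinExt} of \cite{PR1}, which bounds the Manhattan variation of the extension by $\Psi(\varphi)$ — the integral over slices of the geodesic distance \emph{inside the image polygon} between images of opposing boundary points. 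The whole difficulty of the proof is then showing that these geodesics have length at most $|D_1f|$ (resp.\ $|D_2f|$) along the corresponding slice of $f$, plus an $O(\epsilon/(K+1))$ error. This requires adding to the grid a family of extra ``guideline'' lines chosen, via an averaging argument and the good-arrival-grid machinery, to nearly minimize the variation of $f$ in each thin strip, so that the geodesic can be routed via a guideline at controlled extra cost. Your sketch does not anticipate this mechanism, nor does it account for the points covered by the exceptional set $E$ near very large jumps, where the guideline estimate fails and one must instead use the crude bound $|D_\tau f|(\Gamma)$ together with the smallness of $\mathcal{H}^1(E)$. These are not bookkeeping issues; they are where the theorem is actually proved.
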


The second implication is proved by carefully chopping up the grid and choosing an appropriate parametrization of the image of $f_k$ on each of these parts. The first implication of Theorem \ref{main} is more involved, thus we conclude this introduction by briefly outlining the basic plan of its proof and the intuition behind the main steps.  

We will show that any NCBV map can be approximated in the sense of \eqref{Manhattan strict convergence} by finitely piecewise affine homeomoprhisms extending the identity on the boundary of $Q(0,1)$. The construction of the homeomorphism occurs in two steps: we first apply the NCBV condition and find a continuous and one-to-one mapping on a one dimensional grid of rectangles inside $Q(0,1)$. Without loss of generality we can assume this mapping to be piecewise linear on the grid, this is anyway true up to a small error in $L^\infty$.  

Once a continuous and piecewise linear map on the grid is defined, we can perform a piecewise affine homeomorphic extension separately in each rectangle 
being careful to comply with convergence \eqref{Manhattan strict convergence}. The idea is to use the main result of \cite{PR1} which provides a homeomorphic extension $v$ inside the rectangle $R$ having the minimal possible variation $(|D_1 v| + |D_2 v|)(R)$  once the boundary values are a fixed Jordan curve in $\er^2$. 
The minimal extension is based on the concept of geodesic filling, therefore, if the behaviour of the boundary values is close enough to that of the original NCBV map $f$, then we can find an estimate like 
\begin{equation}\label{prova}
(|D_1 v| + |D_2 v|)(R) \leq (|D_1 f| + |D_2 f|)(R) + \eps |R|
\end{equation}
and we can conclude \eqref{Manhattan strict convergence} thanks to the lower semicontinuity of the total variation. 
In order to achieve the desired control on the boundary values, we will have to apply the NCBV condition on a grid which is much finer than the one we are really interesetd in. This finer grid will include many extra lines which we will call \emph{guidelines} in the sequel. Intuitively speaking, the reason is the following.

 \begin{figure}[thbp]
\scalebox{0.6}{
\input{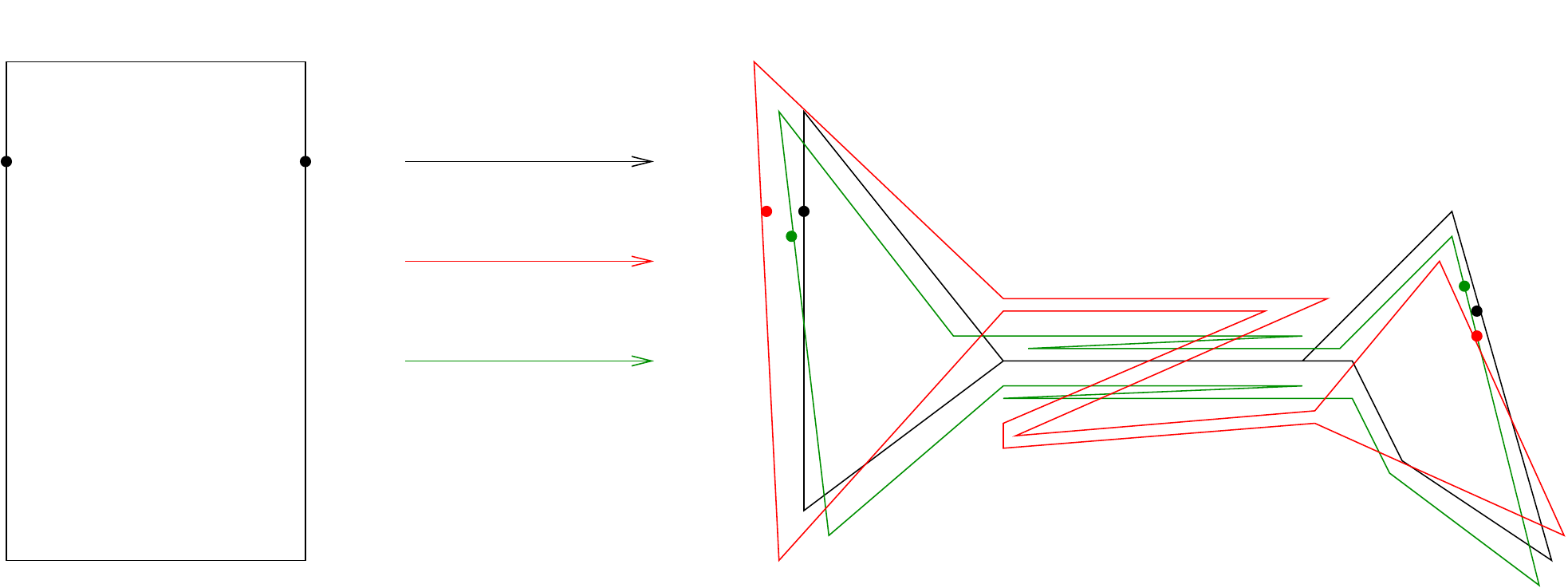_t}
}
\caption{Possible different injectifications of a degenerate Jordan curve}\label{Fig:intro}
\end{figure}

If the topological image of $R$ through the original map $f$ is a degenerate Jordan curve (see Figure \ref{Fig:intro}), hence it collapses on itself in some parts, then corresponding geodesics inside $f(R)$ and inside the polygon identified by the injective curve provided by the NCBV condition may behave very differently. For example, both the green and the red polygons depicted in Figure \ref{Fig:intro} are possible injectifications of the map $f$, but in the red case the geodesic connecting the red spots inside the red polygon is much longer than the geodesic connecting the corresponding black spots inside $f(R)$ (the black degenerate polygon in Figure \ref{Fig:intro}). Hence, if the injectification gives the red curve, it may not be possible to achieve estimate \eqref{prova}. Even if the estimate \eqref{prova} does hold it is far from obvious how to prove it.

We solve this problem using the guidelines described above. The guidelines form a much finer grid of lines inside the rectangle $R$ and are chosen to be almost optimal in terms of the length of their images in $f$ on the interval they are chosen on. Then for the vast majority of pair of (horrizontally or vertically) opposing points we estimate the length of the geodesic connecting the image of the pair inside the injectification of $f$ on $R$ by the length of the guideline plus some tiny $\epsilon$. The set of pairs where this estimate does not hold is made so small that its final contribution to the variation is at most $\epsilon$. Thus we are able to obtain \eqref{prova}.

The paper is organized as follows. In section 2 we collect some useful definitions and preliminary results not new in the literature. In section 3 we introduce the \emph{no-crossing-BV} condition and we show that it well behaves with respect to monotonicity. Section 4 is entirely devoted to the proof of Theorem \ref{main}.

\section{Preliminaries}

In this section we collect some existing preliminary results that will be useful in the sequel.

\begin{lemma}\label{Inevitable}	
	Let $A,B\in \R^2$ and $\delta>0$ and $L:=\abs{A-B}$ Let $C\in B(A,\delta L)$ and $D\in B(B,\delta L).$
	Let $\eta:[0,1]\to \R^2$ be a path (with constant speed parametrization) joining points $C$ and $D$ with arc length
	$l(\eta)\leq (1+\epsilon)L.$ Let $\gamma:[0,1]\to \R^2$ be the constant speed parametrization of the line segment joining $A$ and $B.$
	
	Then for every $t\in [0,1]$ $\abs{\eta(t)-\gamma(t)}\leq 3\sqrt{2\epsilon+\delta} L.$

\end{lemma}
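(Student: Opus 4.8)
The plan is to reduce this to elementary plane geometry: I will confine the point $\eta(t)$ to a thin lens around $\gamma(t)$ by combining the constant speed (arc length) bound on $\eta$ with two applications of the triangle inequality, and then simply read off the estimate.

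First I would normalize: after composing with an isometry I may take $A=(0,0)$ and $B=(L,0)$, so that $\gamma(t)=(tL,0)$, and I may also assume $\epsilon+2\delta\le 1$, which is the only regime of interest. Fix $t\in[0,1]$ and write $\eta(t)=(p,q)$. Since $\eta$ is parametrized with constant speed on $[0,1]$, the arc of $\eta$ over $[0,t]$ has length $t\,l(\eta)\le t(1+\epsilon)L$, so $\abs{\eta(t)-C}\le t(1+\epsilon)L$ and, symmetrically, $\abs{\eta(t)-D}\le (1-t)(1+\epsilon)L$. Because $\abs{C-A}<\delta L$ and $\abs{D-B}<\delta L$, the triangle inequality then gives $\abs{\eta(t)-A}\le t(1+\epsilon)L+\delta L\le tL+(\epsilon+\delta)L$ and $\abs{\eta(t)-B}\le (1-t)L+(\epsilon+\delta)L$.

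From these two bounds I would extract two facts. Since $p\le\abs{\eta(t)-A}$ and $L-p\le\abs{\eta(t)-B}$, the first coordinate is pinned: $\abs{p-tL}\le(\epsilon+\delta)L$. Adding the two bounds, $\abs{\eta(t)-A}+\abs{\eta(t)-B}\le(1+\epsilon+2\delta)L$, so $\eta(t)$ lies in the closed ellipse with foci $A,B$ whose major axis has length $(1+\epsilon+2\delta)L$; that ellipse has semi-minor axis $\beta=\tfrac{L}{2}\sqrt{(1+\epsilon+2\delta)^2-1}=\tfrac{L}{2}\sqrt{(\epsilon+2\delta)(\epsilon+2\delta+2)}$, hence $\abs{q}\le\beta$.

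Finally I would combine: $\abs{\eta(t)-\gamma(t)}\le\abs{p-tL}+\abs{q}\le(\epsilon+\delta)L+\beta$, so it remains only to verify $(\epsilon+\delta)L+\beta\le 3\sqrt{2\epsilon+\delta}\,L$. Here $\epsilon+2\delta\le 1$ gives $\beta\le\tfrac{L}{2}\sqrt{3(\epsilon+2\delta)}\le\tfrac{L}{2}\sqrt{6(2\epsilon+\delta)}=\tfrac{\sqrt6}{2}\sqrt{2\epsilon+\delta}\,L$ (using $\epsilon+2\delta\le 2(2\epsilon+\delta)$), while $\epsilon+\delta\le 1$ gives $(\epsilon+\delta)L\le\sqrt{\epsilon+\delta}\,L\le\sqrt{2\epsilon+\delta}\,L$, whence $\abs{\eta(t)-\gamma(t)}\le\bigl(1+\tfrac{\sqrt6}{2}\bigr)\sqrt{2\epsilon+\delta}\,L<3\sqrt{2\epsilon+\delta}\,L$, as claimed. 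I do not expect a genuine obstacle here: the only real point is the confinement of $\eta(t)$ to the intersection of the two disks above (equivalently, to the ellipse), after which everything is bookkeeping; the one place that needs mild care is deciding which inequalities to square so that the final constant stays equal to $3$.
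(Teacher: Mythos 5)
Your argument is correct and takes a genuinely different (and, I think, cleaner) route than the paper's. The paper proves the bound via a law-of-cosines computation based on an obtuse-angle observation at the auxiliary point $(t(1+\epsilon)L+\delta L,0)$; that argument is somewhat terse and glosses over a sign/casework issue for $t$ near $1$. You instead confine $\eta(t)$ to the intersection of the two disks $B(A,t(1+\epsilon)L+\delta L)$ and $B(B,(1-t)(1+\epsilon)L+\delta L)$, and thus to a thin confocal ellipse with foci $A,B$; the bound then falls out by projecting onto the $AB$-axis (for the $|p-tL|$ estimate) and reading off the semi-minor axis (for the $|q|$ estimate). This is more transparent geometrically, needs no trigonometry, and as a bonus produces the sharper constant $1+\tfrac{\sqrt6}{2}\approx 2.22$, so $3$ has comfortable slack. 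One point worth being more upfront about: your assumption $\epsilon+2\delta\le1$ is not a mere normalization but a genuine restriction — the statement as written is actually false for large $\epsilon$ (take $\delta=0$ and a symmetric tent path from $A$ to $B$: the midpoint lies at height $\tfrac{L}{2}\sqrt{\epsilon^2+2\epsilon}$, which exceeds $3\sqrt{2\epsilon}\,L$ once $\epsilon>70$). The paper's own proof also tacitly requires smallness, and in the only application (the estimate \eqref{BigJumps}) one has $\epsilon=\delta=\sigma^2/5$ with $\sigma$ tiny, so this restriction is harmless; but you should flag it as an added hypothesis under which the lemma holds rather than dismiss it as "the only regime of interest."
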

\begin{proof}
	To prove the claim we assume without loss of generality that $A$ is the origin and $B=(L,0).$ 
	Fix a point $t\in [0,1].$ We use notation $l_1=l(\eta|_{[0,t]})$ and $l_2=l(\eta|_{[t,1]}).$ Notice that $\eta|_{[0,t]}\subset B(A,l_1+\delta L)\subset B(A,t(1+\epsilon)L+\delta L)$ and therefore
	we have that the angle $\sphericalangle(\eta(t),(t(1+\epsilon)L+\delta L,0),(L,0))$ is in $[\pi/2,\pi].$ 
	This and  law of cosines give 
	\begin{equation}
	l_2^2\geq L^2((1-t)-\epsilon-\delta)^2+E^2,
	\end{equation}
	where $E$ is the distance between $\eta(t)$ and the point $(t(1+\epsilon)L+\delta L,0).$ Combining with the assumption on $l(\eta)=l_1+l_2\leq L(1+\epsilon)$ we have
	$$
	(1-t)L+\epsilon L\geq l_2\geq \sqrt{((1-t)L-t\epsilon L-\delta L)^2+E^2}.
	$$
	With some algebra this simplifies to 
	$$
	2((1-t)L-t\epsilon L-\delta L)((1+t)\epsilon L+\delta L)+((1+t)\epsilon L+\delta L)^2\geq E^2
	$$
	and further we obtain
	\begin{equation}
	3L^2(2\epsilon+\delta)\geq E^2.
	\end{equation}
	To finish we notice that $\abs{\eta(t)-\gamma(t)}\leq E+\delta L.$
\end{proof}

\subsection{Properties of weak limits}

In this part we recall useful properties of weak $BV$ strict limits of planar homeomorphisms. 

\begin{lemma}\label{Owens}
	Let $f_k,f\in BV(a,b)$ be such that $f_k$ converge strictly to $f$ on $(a,b)$. Further let $c\in(a,b)$ such that $f$ is continuous at $c$. Then $f_k$ converge strictly to $f$ on $(a,c)$ and on $(c,b)$.
\end{lemma}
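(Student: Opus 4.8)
The plan is to reduce the strict convergence on the subintervals to strict convergence on $(a,b)$ by splitting the total variation. Write $V(g;I)$ for the pointwise total variation of a function $g$ on an interval $I$; recall that for $g\in BV(a,b)$ one has $|Dg|(a,b)=V(g;(a,b))$ when $g$ is identified with its good (pointwise) representative, and that $V$ is additive over adjacent intervals: $V(g;(a,b))=V(g;(a,c))+V(g;(c,b))$ provided one accounts correctly for a possible jump at the splitting point $c$. First I would observe that $f_k\to f$ in $L^1(a,b)$ trivially implies $f_k\to f$ in $L^1(a,c)$ and in $L^1(c,b)$, so only the convergence of the total variations on the two pieces needs proof. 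By lower semicontinuity of the total variation under $L^1$ convergence,
\begin{equation}\label{eq:lsc-pieces}
V(f;(a,c))\le\liminf_{k\to\infty}V(f_k;(a,c)),\qquad V(f;(c,b))\le\liminf_{k\to\infty}V(f_k;(c,b)).
\end{equation}
Hence it suffices to establish the two matching $\limsup$ inequalities, and for this it is enough to show
\begin{equation}\label{eq:sum-limsup}
\limsup_{k\to\infty}\bigl(V(f_k;(a,c))+V(f_k;(c,b))\bigr)\le V(f;(a,c))+V(f;(c,b)),
\end{equation}
because then \eqref{eq:lsc-pieces} forces each individual $\limsup$ to coincide with the corresponding $\liminf$ and with $V(f;\cdot)$.

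To prove \eqref{eq:sum-limsup} I would use additivity of the variation together with the hypothesis that $f$ is continuous at $c$. For each $k$, additivity of the pointwise variation gives
\begin{equation}\label{eq:add-k}
V(f_k;(a,c))+V(f_k;(c,b))\le V(f_k;(a,b)),
\end{equation}
since the left-hand side is the variation of $f_k$ computed over partitions that are refined to include the point $c$, hence is at most the supremum over all partitions of $(a,b)$. By the strict convergence on $(a,b)$ the right-hand side of \eqref{eq:add-k} converges to $V(f;(a,b))$. On the other hand, because $f$ is continuous at $c$, its jump part carries no mass at $c$, so additivity holds \emph{without a defect term} for $f$ itself:
\begin{equation}\label{eq:add-f}
V(f;(a,b))=V(f;(a,c))+V(f;(c,b)).
\end{equation}
Combining \eqref{eq:add-k}, the convergence $V(f_k;(a,b))\to V(f;(a,b))$, and \eqref{eq:add-f} yields exactly \eqref{eq:sum-limsup}, and the lemma follows.

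The main point requiring care — and the step I would expect to be the only real obstacle — is the interplay between the representative chosen for the $BV$ functions and the additivity of the variation at the point $c$: one must make sure that $V(f_k;(a,c))+V(f_k;(c,b))$ genuinely underestimates $V(f_k;(a,b))$, which is automatic for the pointwise variation but needs the continuity of $f$ at $c$ to conclude that the limit function loses no variation in the splitting (a jump of $f$ at $c$ would be counted once in $(a,b)$ but split between the two pieces of the limit of the $f_k$, breaking the argument). Since $f_k$ are BV homeomorphisms of an interval they are monotone, so in fact their variation is just the oscillation $|f_k(b^-)-f_k(a^+)|$ type quantity and all the additivity statements above are elementary; but the cleanest exposition is the measure-theoretic one via lower semicontinuity plus \eqref{eq:add-f}, and I would write it that way. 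One should also remark that the continuity hypothesis at $c$ is essential and not merely technical, as the simple example of a sequence of homeomorphisms converging strictly to a function with a jump at $c$ shows.
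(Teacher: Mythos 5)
Your proof is correct and uses essentially the same ingredients as the paper's: lower semicontinuity of total variation on each subinterval, additivity $|Df|(a,b)=|Df|(a,c)+|Df|(c,b)$ thanks to $|Df|(\{c\})=0$, and a squeeze/contradiction to identify the limits of $|Df_k|(a,c)$ and $|Df_k|(c,b)$ (the paper phrases this via a convergent subsequence and a contradiction, you via the $\limsup$ of the sum, which is the same argument). One small caveat: the closing remark that ``$f_k$ are BV homeomorphisms of an interval so monotone'' does not apply here, since the lemma is stated for general $f_k\in BV(a,b)$; fortunately you explicitly discard that shortcut in favour of the measure-theoretic argument, so the proof itself is unaffected.
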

\begin{proof}
	Since $f$ is continuous at $c$ we have $|Df|(\{c\}) = 0$. It is clear that $f_k\rightarrow f$ in $L^1$ on both $(a,c)$ and $(c,b)$. Therefore it remains to prove the 
	convergence of $|Df|(a,c)$ and $|Df|(c,b)$.

	Both of the sequences $|Df_k|(a,c)$ and $|Df_k|(c,b)$ are bounded and therefore have converging 
	subsequences. Choose any subsequence $k(j)$ so that 
	$$
	|Df_{k(j)}|(a,c)\rightarrow A
	\text{ and }
	|Df_{k(j)}|(c,b)\rightarrow B
	$$
	for some $A$ and $B$. Then we have 
	$$
	A + B \leq\lim_{j\rightarrow \infty} |Df_{k(j)}|(a,b)=|Df|(a,b).
	$$ 
	
	By the lower semi-continuity of the total variation we see that
	\begin{equation}\label{Contra}
	A\geq|Df|(a,c)\text{ and }B\geq|Df|(c,b).
	\end{equation}
	Now, assuming that $A>|Df|(a,c)$ and utilizing $|Df|(\{c\}) = 0$ we have 
	$$
	|Df|(c,b)=|Df|(a,b)-|Df|(a,c)\geq A+B -|Df|(a,c)>B.
	$$
	But this cannot be because of \eqref{Contra} and so
	$$
	|Df_k|(a,c)\rightarrow |Df|(A) 
	\text{ and }|Df_k|(c,b)\rightarrow |Df|(B).
	$$  
\end{proof}

We recall a known property of planar $BV$ functions. If $f: Q(0,1) \to \R^2$ is a $BV$ mapping, then for a.e. $t \in [-1,1]$ the restriction of $f$ to the line $\{y=t\}$ is one-dimensional $BV$. In particular, $f|_{\{y= t\}}$ has at most countably many jumps whose total size is finite. 
For the reader's convenience, we report here a reformulation of the statement Proposition 2.3 of \cite{CHKR} adapted to our setting. 

\begin{prop}\label{Proposition 2.3}
	Let $f_k, f \in BV(Q(0,1),\er^2)$ be a sequence of mappings such that 
	$$
	\lim_{k \to \infty} (|D_1 f_k| + |D_2 f_k|)(Q(0,1)) = (|D_1 f| + |D_2 f|)(Q(0,1)).
	$$
	Then there exists a $\mathcal{L}^1$-negligible set $N \subset [-1,1]$ such that, up to a subsequence, $f_k |_{\{y = t\}},\, f_k |_{\{ x = t\}},\, f |_{\{y = t\}},\, f |_{\{ x = t\}}$ are one dimensional $BV$ functions and 
	\begin{align*}
	& |D_1 f_k |_{\{y = t\}}| ([-1,1] \times\{t\}) \longrightarrow |D_1 f |_{\{y = t\}}| ([-1,1] \times\{t\}), \\
	& |D_2 f_k |_{\{x = t\}}| (\{t\} \times [-1,1]) \longrightarrow |D_2 f |_{\{x = t\}}| (\{t\} \times [-1,1])
	\end{align*}
	for all $t \in [-1,1] \setminus N$.
\end{prop}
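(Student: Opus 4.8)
The plan is to deduce the statement from the classical one-dimensional slicing theorem for $BV$ maps, combined with a Scheff\'e-type argument that couples the two coordinate directions. Recall that for any $g\in BV(Q(0,1),\er^2)$ and $\mathcal L^1$-a.e.\ $t$ the slices $g|_{\{y=t\}}$ and $g|_{\{x=t\}}$ are one-dimensional $BV$ maps and the partial variations disintegrate:
\[
|D_1 g|(Q(0,1))=\int_{-1}^{1}|D_1 g|_{\{y=t\}}|([-1,1]\times\{t\})\,dt,\qquad |D_2 g|(Q(0,1))=\int_{-1}^{1}|D_2 g|_{\{x=s\}}|(\{s\}\times[-1,1])\,ds.
\]
Applying this to $f$ and to each $f_k$ and intersecting the (countably many) resulting full-measure sets, I would fix a single $\mathcal L^1$-negligible set $N_0\subset[-1,1]$ outside of which all four families of slices are one-dimensional $BV$, and abbreviate $h_k(t):=|D_1 f_k|_{\{y=t\}}|([-1,1]\times\{t\})$, $h(t):=|D_1 f|_{\{y=t\}}|([-1,1]\times\{t\})$, and likewise $g_k(s),g(s)$ for the vertical slices. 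These are nonnegative functions in $L^1([-1,1])$, and by the disintegration identities the hypothesis of the proposition reads precisely $\int h_k+\int g_k\to\int h+\int g$. I would also use that the assumed convergence includes $f_k\to f$ in $L^1(Q(0,1),\er^2)$.

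Next, Fubini's theorem gives $\int_{-1}^{1}\|f_k|_{\{y=t\}}-f|_{\{y=t\}}\|_{L^1([-1,1])}\,dt=\|f_k-f\|_{L^1(Q(0,1))}\to 0$, so after passing to a subsequence $f_k|_{\{y=t\}}\to f|_{\{y=t\}}$ in $L^1$ for a.e.\ $t$, and after a further subsequence also $f_k|_{\{x=s\}}\to f|_{\{x=s\}}$ in $L^1$ for a.e.\ $s$. Lower semicontinuity of the one-dimensional total variation under $L^1$ convergence then yields $h(t)\le\liminf_k h_k(t)$ and $g(s)\le\liminf_k g_k(s)$ for a.e.\ $t$ and $s$.

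It remains to upgrade these one-sided bounds to genuine convergence, and this is the heart of the matter, since coupling the two directions is unavoidable: lower semicontinuity controls each slice-variation only from below, a priori variation could be transferred from the horizontal to the vertical partial derivative in the limit, and only the \emph{sum} of the two partial variations is assumed to converge. Combining Fatou's lemma --- which gives $\int h\le\liminf_k\int h_k$ and $\int g\le\liminf_k\int g_k$ --- with $\int h_k+\int g_k\to\int h+\int g$ yields $\limsup_k\int h_k\le(\int h+\int g)-\liminf_k\int g_k\le\int h$, hence $\int h_k\to\int h$ and, symmetrically, $\int g_k\to\int g$. Since $h_k\ge 0$, $\liminf_k h_k\ge h$ a.e., and $\int h_k\to\int h$, Scheff\'e's lemma then gives $h_k\to h$ in $L^1([-1,1])$ (dominate $(h-h_k)^{+}$ by $h$ and use dominated convergence, then control $(h_k-h)^{+}$ by the difference of the integrals), and likewise $g_k\to g$ in $L^1([-1,1])$. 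Extracting one final subsequence so that both $h_k\to h$ and $g_k\to g$ hold $\mathcal L^1$-a.e., and taking $N$ to be the union of $N_0$ with the countably many exceptional null sets accumulated above, I obtain that for every $t\notin N$ the four slices are one-dimensional $BV$ and $h_k(t)\to h(t)$, $g_k(t)\to g(t)$, which is the claimed conclusion.

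The step I expect to be the main obstacle is precisely the coupling in the previous paragraph: the slicing theorem and the passages from $L^1$ to a.e.\ convergence are routine, but since the hypothesis controls only the sum of the horizontal and vertical partial variations, one cannot argue direction by direction --- the two Fatou inequalities must be played off against the convergence of the sum in order to pin down each partial variation separately.
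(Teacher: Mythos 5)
The paper does not prove this proposition; it cites it from \cite{CHKR} as a reformulation of their Proposition~2.3, so there is no in-paper argument to compare against. That said, your proposal is a correct and complete proof.

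You rightly observe that the statement as printed omits the hypothesis $f_k\to f$ in $L^1(Q(0,1),\R^2)$, which is in fact indispensable: with only the hypothesis on total partial variations, taking $f(x,y)=(\operatorname{sgn} x,0)$ and $f_k(x,y)=(\operatorname{sgn} y,0)$ for all $k$ satisfies the displayed equality of total variations yet makes the slice conclusion fail for every $t$. Since the proposition is invoked in the paper only in the context of strict (and hence $L^1$) convergence, reading the extra hypothesis in is the right move, and you flagged it.

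With that hypothesis, the remaining steps are all sound: the BV slicing/disintegration identities in each coordinate direction; Fubini plus subsequence extraction to get $L^1$ slice convergence a.e.\ in each direction; lower semicontinuity of the one-dimensional total variation giving $h\le\liminf h_k$ and $g\le\liminf g_k$ a.e.; the two Fatou inequalities played off against $\int h_k+\int g_k\to\int h+\int g$ to pin down $\int h_k\to\int h$ and $\int g_k\to\int g$ separately; and the Scheff\'e-type argument splitting $|h_k-h|$ into $(h-h_k)^+$ (which tends to $0$ a.e.\ and is dominated by $h$) and $(h_k-h)^+$ (controlled by $\int(h_k-h)+\int(h-h_k)^+$) to conclude $L^1$ convergence of the slice variations, and then a final subsequence for a.e.\ convergence. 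Your identification of the coupling between the two directions as the only nontrivial point is exactly right, and the Fatou-plus-sum argument resolves it cleanly. No gap.
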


\subsection{Choosing grids}
	\begin{lemma}\label{GridChooser}
Let $f \in BV(Q(0,1))$. Then for $\mathcal{L}^2$-almost every choice of $(x,y)\in Q(0,1)$ it holds that $f_{|T}$ is continuous at $(x,y)$, where $T = \{x\}\times[-1,1]\cup[-1,1]\times\{y\}$.
	\end{lemma}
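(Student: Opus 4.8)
The plan is to reduce the two-dimensional statement to the one-dimensional fact about $BV$ functions recalled just before the lemma, namely that a $BV$ function on an interval has at most countably many discontinuity points. First I would invoke the standard slicing theory for planar $BV$ maps: for $\mathcal L^1$-a.e. horizontal line $\{y = y_0\}$ the restriction $f|_{\{y=y_0\}}$ is a one-dimensional $BV$ function, and symmetrically for $\mathcal L^1$-a.e. vertical line $\{x = x_0\}$ the restriction $f|_{\{x=x_0\}}$ is one-dimensional $BV$. Fix such a "good" line in each direction; on a good horizontal line, $f|_{\{y=y_0\}}$ is discontinuous only on a countable (hence $\mathcal L^1$-null) set of $x$-values, and likewise on a good vertical line $f|_{\{x=x_0\}}$ is discontinuous only on a countable set of $y$-values.

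The key observation is that continuity of $f_{|T}$ at a point $(x_0,y_0)$, where $T = \{x_0\}\times[-1,1]\cup[-1,1]\times\{y_0\}$, is equivalent to continuity of the horizontal slice $f|_{\{y=y_0\}}$ at $x_0$ together with continuity of the vertical slice $f|_{\{x=x_0\}}$ at $y_0$ (the point $(x_0,y_0)$ is the unique intersection of the two segments, so a $T$-relatively-open neighborhood of it splits into a horizontal piece and a vertical piece, and continuity along $T$ is exactly continuity along each piece). So I want to show that for $\mathcal L^2$-a.e. $(x_0,y_0)$ both slices through $(x_0,y_0)$ are good lines and $f$ restricted to each is continuous at the relevant coordinate.

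Concretely, let $B \subset Q(0,1)$ be the set of $(x_0,y_0)$ for which $f_{|T}$ is \emph{not} continuous at $(x_0,y_0)$. By the equivalence above, $(x_0,y_0) \in B$ only if either the horizontal line $\{y = y_0\}$ is not good, or it is good but $f|_{\{y=y_0\}}$ is discontinuous at $x_0$, or the analogous vertical failure occurs. The set of $y_0$ for which the horizontal line is not good is $\mathcal L^1$-null, so the corresponding set of $(x_0,y_0)$ is $\mathcal L^2$-null by Fubini; and for each good $y_0$, the set of bad $x_0$ on that line is countable, hence $\mathcal L^1$-null, so by Fubini again the set of such $(x_0,y_0)$ is $\mathcal L^2$-null. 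The same argument with the roles of $x$ and $y$ exchanged handles the vertical failures. Taking the union of these four $\mathcal L^2$-null sets shows $\mathcal L^2(B) = 0$.

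The only mild subtlety — and the place I would be careful — is pinning down the precise notion of "value" of $f$ along a slice and of "continuity of $f_{|T}$", since $f$ is only an $L^1$ (equivalence) class: one should work with the good representative furnished by the one-dimensional $BV$ structure on almost every slice and check that the two slice-representatives through a common point agree there, so that "$f_{|T}$" is unambiguously defined and its continuity at $(x_0,y_0)$ really does decouple into the two one-dimensional statements. This is a routine consistency check using the fact that for $\mathcal L^2$-a.e. point the value is the Lebesgue value in the plane, which coincides with the one-dimensional good value on almost every slice through it; once that is in place the measure estimate above is immediate.
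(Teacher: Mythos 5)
Your proof is correct and follows essentially the same route as the paper: slicing to get one-dimensional $BV$ on almost every horizontal and vertical line, noting that each such line carries only countably many discontinuity points, and concluding via Fubini that the exceptional set in the plane is $\mathcal L^2$-null. The paper organizes this with explicit "bad" ordinate sets $\tilde A$, $\tilde B$ (and applies Fubini to the indicator of $\{|D_1f|(x,y)>0\}$) rather than your four-way union, and it is terser about the consistency of slice values at the crossing point, but the underlying argument is the same.
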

	\begin{proof}
		By $A$ denote the $x$ ordinates in $[-1,1]$ such that $f_{|\{x\}\times[-1,1]}$ is BV on the line and by $B$ denote the $y$ ordinates such that $f_{|[-1,1]\times\{y\}}$ is BV on those lines (see Proposition~\ref{Proposition 2.3}). Of course $\mathcal{L}^1([-1,1]\setminus (A\cap B)) = 0$. Let us denote a set of `bad' $x$ ordinates as $\tilde{A}$ defined as those
		$$
			\tilde{A} = \{x\in A: \mathcal{L}^1(\{y\in B: |D_1f|(x,y)>0\})>0 \}.
		$$
		
		Let us prove that $\mathcal{L}^2(\tilde{A}\times B) = 0$. For all $y\in B$ the set $\{x\in A: |D_1f|(x,y)>0\}$ is countable and so $\mathcal{L}^1(\{x\in A: |D_1f|(x,y)>0\})=0$. By Fubini theorem therefore $$
			\mathcal{L}^2(\tilde{A}\times B) = \int_{-1}^1\int_{-1}^1 \chi_{\{x\in A: |D_1f|(x,y)>0\}}(x,y) \, dy \,  dx = \int_{-1}^1 0 \,  dx = 0.
		$$
		This however implies that  $\mathcal{L}^1(\tilde{A}) = 0$. Similarly the set
		$$
			\tilde{B} = \{y\in B: \mathcal{L}^1(\{x\in A: |D_2f|(x,y)>0\})>0 \}
		$$
		has measure zero.

		Obviously $f$ is continuous at $(x,y)$ with respect to $T$ whenever $|D_1f|(x,y)+|D_2f|(x,y) = 0$. Further for all $x\in A$ it holds that $|D_2f|(x,y) = 0$ for almost all $y\in[-1,1]$. Then (by the definition of $\tilde{A}$) for any choice of $x \in A\setminus \tilde{A}$ we have that $f$ is continuous at $(x,y)$ with respect to $T$ for almost every choice of $y\in B$ or more specifically for any choice of $y\in B\setminus \tilde{B}$. Since $(A \setminus \tilde{A})\times (B \setminus \tilde{B})$ has full measure, we have our claim.
	\end{proof}

	\begin{corollary}\label{GridChooser2}
		Let $K \in \en$ and $f\in BV(Q(0,1); Q(0,1))$ then for $\mathcal{L}^{2K}$-almost every choice of $-1<x_1 < x_2< \dots, <x_K<1$ and $-1<y_1 < y_2< \dots, <y_K<1$ it holds that $f_{|T}$ is continuous at $(x_j,y_m)$ where $T = (\bigcup_{j=1}^K\{x_j\}\times[-1,1])\cup(\bigcup_{m=1}^K[-1,1]\times\{y_m\})$.
	\end{corollary}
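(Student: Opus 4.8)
The plan is to deduce this from Lemma~\ref{GridChooser} by Fubini's theorem. Let $G\subset Q(0,1)$ denote the set of those $(x,y)$ for which $f$ restricted to the cross $\{x\}\times[-1,1]\cup[-1,1]\times\{y\}$ is continuous at $(x,y)$; applying Lemma~\ref{GridChooser} to $f$ (or, if one prefers, to each of its two components separately and intersecting) gives $\mathcal{L}^2(Q(0,1)\setminus G)=0$. The one geometric remark that is needed is a localization: if $-1<x_1<\dots<x_K<1$ and $-1<y_1<\dots<y_K<1$ and $T=(\bigcup_{j=1}^K\{x_j\}\times[-1,1])\cup(\bigcup_{m=1}^K[-1,1]\times\{y_m\})$, then in a sufficiently small ball around the crossing point $(x_j,y_m)$ the set $T$ agrees with the single cross $\{x_j\}\times[-1,1]\cup[-1,1]\times\{y_m\}$, since every other line of $T$ stays at positive distance from $(x_j,y_m)$ thanks to the strict ordering of the two families. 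Consequently $f_{|T}$ is continuous at $(x_j,y_m)$ whenever $(x_j,y_m)\in G$.

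It thus suffices to show that for $\mathcal{L}^{2K}$-almost every admissible tuple $(x_1,\dots,x_K,y_1,\dots,y_K)$ one has $(x_j,y_m)\in G$ for every $j,m\in\{1,\dots,K\}$. Fix $j$ and $m$ and let $\pi_{j,m}\colon(-1,1)^{2K}\to(-1,1)^2$ be the projection onto the $x_j$- and $y_m$-coordinates. By Fubini's theorem the set $\pi_{j,m}^{-1}(Q(0,1)\setminus G)$ has $\mathcal{L}^{2K}$-measure equal to $\mathcal{L}^2(Q(0,1)\setminus G)$ times the (finite) measure of the remaining $2K-2$ free coordinates, hence it is $\mathcal{L}^{2K}$-null. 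Since there are only $K^2$ pairs $(j,m)$, the union $\bigcup_{j,m}\pi_{j,m}^{-1}(Q(0,1)\setminus G)$ is still $\mathcal{L}^{2K}$-null, and in particular its intersection with the ordered region $\{-1<x_1<\dots<x_K<1\}\times\{-1<y_1<\dots<y_K<1\}$ is negligible inside that region. For every tuple outside this exceptional null set all $K^2$ crossing points lie in $G$, so by the localization remark $f_{|T}$ is continuous at each $(x_j,y_m)$, which is exactly the assertion.

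The argument is essentially routine, and I do not anticipate any genuine obstacle; the only step deserving a sentence of care is the localization observation, which however follows immediately from the fact that the strict ordering keeps the $2K$ lines of $T$ mutually separated away from the crossings.
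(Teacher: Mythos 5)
Your proof is correct, and since the paper states Corollary~\ref{GridChooser2} without proof (it is presented as an immediate consequence of Lemma~\ref{GridChooser}), the Fubini-plus-localization argument you supply is exactly the intended filling of that gap. The two ingredients that genuinely deserve the sentence of care you gave them are both handled properly: the localization (strict ordering of the $x_j$ and of the $y_m$ keeps all other lines of $T$ at positive distance from the crossing $(x_j,y_m)$, so continuity of $f_{|T}$ at that point reduces to continuity with respect to the single cross appearing in Lemma~\ref{GridChooser}), and the Fubini lift (each $\pi_{j,m}^{-1}(Q(0,1)\setminus G)$ is $\mathcal{L}^{2K}$-null because $Q(0,1)\setminus G$ is $\mathcal{L}^2$-null and the remaining $2K-2$ coordinates range over a set of finite measure, and there are only $K^2$ such preimages). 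The parenthetical remark about applying the lemma componentwise is a sensible safeguard given that the lemma is phrased for scalar $f$, though the lemma's own proof works verbatim for $\R^2$-valued maps.
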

	
	We conclude this part defining the concept of generalized segment, already introduced in \cite{PP}, which will be useful throughout the proof of Theorem \ref{main}.
	
	\begin{definition}[generalized segments]\label{generalized segments}
	Let $\mathcal{G} \subset \R^2$ be a connected one dimensional grid given by the boundaries of finitely many, non-degenerate rectangles. Let $R$ be one of such rectangles and $a,b$ be two different points of $\partial R$, hence also points of the grid $\mathcal{G}$. Given $\xi >0$ a small parameter, the \emph{generalized segment} $[ab]$ between $a$ and $b$ in $R$ is defined as the standard segment $ab$ if the two points are not in the same side of $\partial R$; otherwise, $[ab]$ is the union of two segments of the form $am$ and $mb$ where $m$ is the point inside $R$ whose distance from the side containing $a$ and $b$ is less than $\xi|ab|$ and the projection of $m$ on the segment $ab$ is exactly the mid-point.
	 \end{definition}

\subsection{Minimal $BV$ extension}

We use the following result of \cite{PR1}. Let $R=[a_1,a_2]\times [b_1,b_2]\subset \R^2$ and let 
$\varphi:\partial R\to \R^2$ be a continuous injection. Let
$\mathcal P$ be the domain bounded by the curve $\varphi(\partial R).$
By 
\begin{equation}
d_{\mathcal P}(p_1,p_2)=\inf\{l(\gamma)\colon \gamma \textrm{ is  
a path joining } p_1,\, p_2 \textrm{ in }\mathcal P\}.
\end{equation}
Here a path joining $p_1$ and $p_2$ in $\mathcal P$ is a continuous curve $\gamma:[0,1]\to \R^2$ such that $\gamma((0,1))\subset
\mathcal P$ and $\gamma(0)=p_1$ and $\gamma(1)=p_2.$ Notice that
$d_{\mathcal P}$ is well defined for points $p_i\in \overline{\mathcal P}.$
We define 
\begin{equation}\label{funzione Psi}
 \Psi(\varphi)=\int_{[a_1,a_2]}d_{\mathcal{P}}(\varphi(t,b_1),\varphi(t,b_2))\,dt+
 \int_{[b_1,b_2]}d_{\mathcal{P}}(\varphi(a_1,t),\varphi(a_2,t))\,dt
\end{equation}

\begin{thm}
 \label{MinExt}[Minimal Extension, \cite{PR1} Theorem A]
 Let $\varphi$ be as above. For every $\epsilon>0$ there exists a piecewise affine 
 homeomorphism $h$ defined on $R,$ $h|_{\partial R}=\varphi$ and
 \begin{equation}\label{stima MinExt}
  |D_1 h|(R)+|D_2 h|(R)\leq \Psi(\varphi)+\epsilon.
 \end{equation}
\end{thm}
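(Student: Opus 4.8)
The plan is to build $h$ so that its horizontal slices are (essentially) $x$-geodesics of $\mathcal P$ and its vertical slices are (essentially) $y$-geodesics; for such a map the estimate \eqref{stima MinExt} comes out almost by design. Indeed, for any Lipschitz map $g\colon R\to\overline{\mathcal P}$ with $g|_{\partial R}=\varphi$, Fubini's theorem gives
\[
|D_1g|(R)+|D_2g|(R)=\int_{b_1}^{b_2}\ell\big(g(\cdot,t)\big)\,dt+\int_{a_1}^{a_2}\ell\big(g(s,\cdot)\big)\,ds ,
\]
and since $g(\cdot,t)$ is a curve in $\overline{\mathcal P}$ joining $\varphi(a_1,t)$ to $\varphi(a_2,t)$ we have $\ell(g(\cdot,t))\ge d_{\mathcal P}(\varphi(a_1,t),\varphi(a_2,t))$, and symmetrically for the vertical slices; hence $|D_1g|(R)+|D_2g|(R)\ge\Psi(\varphi)$ for every such $g$, with equality precisely when every slice is a geodesic in the relevant direction. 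The content of the theorem is that this lower bound can be almost attained by a homeomorphism. As a first reduction I would approximate $\varphi$ uniformly on $\partial R$ by an injective polygonal map $\tilde\varphi$, use Hausdorff convergence of the Jordan domains to get $\Psi(\tilde\varphi)\to\Psi(\varphi)$, and transfer the solution back to $\varphi$ through a thin boundary collar interpolating between the two curves, whose extra variation tends to $0$ with the approximation. Hence I may assume $\mathcal P$ is a polygon.

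For polygonal $\mathcal P$, a geodesic between boundary points is a finite polygonal arc bending only at reflex vertices, depending continuously on its endpoints off a degenerate set. The natural candidate is
\[
h(x,t):=\sigma^v_x\cap\sigma^h_t ,
\]
where $\sigma^h_t$ is the geodesic in $\overline{\mathcal P}$ from $\varphi(a_1,t)$ to $\varphi(a_2,t)$ for $t\in(b_1,b_2)$ (and the boundary arc $\varphi([a_1,a_2]\times\{b_i\})$ for $t=b_i$), and $\sigma^v_x$ the geodesic from $\varphi(x,b_1)$ to $\varphi(x,b_2)$. The endpoints of $\sigma^h_t$ and $\sigma^v_x$ alternate along $\partial\mathcal P$, so the two arcs always meet; within either family the endpoints occur in cyclic order, so two arcs of the same family never cross transversally. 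If every such pair met in a single point and each family foliated $\mathcal P$, then $h$ would be a homeomorphism with $h|_{\partial R}=\varphi$ and, by the computation above, $|D_1h|(R)+|D_2h|(R)=\Psi(\varphi)$ exactly.

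The obstructions are: (i) two geodesics of the same family may coincide along a sub-arc — a ``wall'' emanating from a reflex vertex — so $\sigma^v_x\cap\sigma^h_t$ need not be a point, and the preimage of a wall would be a whole rectangle, destroying injectivity; and (ii) the two families need not quite foliate $\mathcal P$: near the boundary arcs (the interior horizontal geodesics $\sigma^h_t$ converge, as $t\to b_i$, to the geodesic shortcut between corners rather than to the actual arc) and inside ``pockets'' behind reflex vertices there are regions swept by neither family. I would handle both by working on a fine discrete net of parameters $s_i,t_j$: replace the finitely many geodesics $\sigma^h_{t_j},\sigma^v_{s_i}$ actually used by pairwise disjoint (within each family) near-geodesics meeting the other family transversally in single points, at total length cost $<\epsilon$; the leftover lunes and pockets are absorbed into thin collars / local corrections, whose cost the triangle inequality (applied to the point where a transversal geodesic crosses $\sigma^h_{t_1}$) bounds by a portion of the geodesic term already allowed. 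Then $h$ is defined on the resulting grid of $R$ by the perturbed geodesic arcs, extended crudely and homeomorphically on each small grid cell, and glued into a piecewise affine homeomorphism; by construction the horizontal slices of the result stay uniformly near perturbed horizontal geodesics with total length at most $\int_{b_1}^{b_2}d_{\mathcal P}(\varphi(a_1,t),\varphi(a_2,t))\,dt+\epsilon$, and symmetrically for the vertical slices, which yields \eqref{stima MinExt} after refining the net and letting $\epsilon\to0$.

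The main obstacle is the consistent resolution of the walls in (i) together with the bookkeeping for (ii): one must perturb a whole (discrete but arbitrarily fine) family of geodesics at once so that the two families become genuine, mutually transverse foliations, while keeping each curve uniformly close to — and barely longer than — its geodesic, since otherwise the cells, and the crude extensions performed on them, are not controlled. Lemma~\ref{Inevitable} is precisely the quantitative statement that a barely-longer-than-geodesic path stays uniformly near the geodesic, and it is what keeps the cell geometry in hand; making all of this rigorous, with the case analysis at reflex vertices and near the boundary, is the technical core of \cite{PR1}.
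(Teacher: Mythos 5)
The paper does not contain a proof of this statement: Theorem~\ref{MinExt} is imported from \cite{PR1} (Theorem~A there) and used as a black box, so there is no in-paper argument to compare against. What follows assesses your sketch on its own terms.

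Your outline does identify the two structural facts the result rests on: the Fubini lower bound $|D_1 g|(R)+|D_2 g|(R)\ge\Psi(\varphi)$ for any Lipschitz extension $g$ into $\overline{\mathcal P}$ (each horizontal slice is a competitor for the corresponding geodesic, and likewise vertically), and the picture of $h$ as a pointwise intersection of a horizontal and a vertical family of geodesics, with the genuine obstructions being the walls at reflex vertices and the pockets swept by neither family. That is indeed where \cite{PR1} has to work. But the steps that close the argument are genuinely missing, not just technical. The polygonal reduction needs more than ``Hausdorff convergence of Jordan domains gives $\Psi(\tilde\varphi)\to\Psi(\varphi)$'': the intrinsic distance $d_{\mathcal P}$ is not continuous under domain perturbation in general (narrow corridors can pinch), and the direction you need, $\Psi(\tilde\varphi)\le\Psi(\varphi)+o(1)$, plus the ``thin boundary collar'' of controlled variation between two $C^0$-close but possibly geometrically wild Jordan curves, is a nontrivial Schoenflies-type construction in its own right. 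The assertion that the leftover lunes and pockets ``are absorbed into thin collars / local corrections'' whose cost is ``bounded by a portion of the geodesic term already allowed'' is not a patching step but the heart of the theorem: those pockets have positive area swept by neither family, and filling them homeomorphically while keeping every slice near-geodesic requires the global combinatorial construction (primary sectors, iterated triangulation) that occupies most of \cite{PR1}. Finally, Lemma~\ref{Inevitable} is a lemma of the present paper, introduced for the proof of Theorem~\ref{main} to compare $\gamma$ and $\gamma_k$ near a jump of $f$; it is not available in \cite{PR1}, so invoking it as ``what keeps the cell geometry in hand'' conflates the two proofs.
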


\section{The NCBV property for $BV$ maps}

	In \cite{PP} the authors introduced a property called the $NC$ condition that characterises the limits of $W^{1,p}$ homeomorphisms from $Q(0,1)$ onto $Q(0,1)$ equalling the identity on the boundary. We aim to extend this concept such that it enables us to characterize the strict limits of $BV$ homeomorphisms. We call the generalization of their concept the $NCBV$ condition and define it using the $BV$ on lines characterisation. In this section we expound this concept.

	Let $f \in BV(Q(0,1), Q(0,1))$,  then there exists a pair of sets $G_1, G_2 \subset [-1,1]$ such that $\mathcal{L}^1\big([-1,1] \setminus G_1\big) = \mathcal{L}^1\big([-1,1] \setminus G_2\big) = 0$ and for all $x \in G_1$, $g_x(\cdot) = f(x,\cdot)$ is $BV$ on $[-1,1]$, as is $h_y(\cdot) = f(\cdot,y)$ for all $y \in G_2$. We define a pair of multi-function representatives $\tilde{f}_1,\tilde{f}_2 \subset [-1,1]^2\times [-1,1]^2$ of the $BV$ map $f$ as follows. Suppose that $x \in G_1$, then $f$ is $BV$ on $\{(x,t) ; t\in [-1,1]\}$. Similarly suppose that $y\in G_2$, then $f$ is $BV$ on $\{(t, y) ; t\in [-1,1]\}$. Let $A_{1,x,y} = \lim_{t\nearrow y}f(x,t)$ and $B_{1,x,y} = \lim_{t\searrow y}f(x,t)$ then $(x,y,w,z) \in\tilde{f}_1$ exactly when $(w,z) \in [A_{1,x,y} B_{1,x,y}]$ where $[AB]$ is the segment that connects $A$ to $B$. Similarly call $A_{2,x,y} = \lim_{t\nearrow x}f(t,y)$ and $B_{2,x,y} = \lim_{t\searrow x}f(t,y)$ then $(x,y,w,z) \in\tilde{f}_2$ exactly when $(w,z) \in [A_{2,x,y} B_{2,x,y}]$.
	
	For each $x\in G_1$ we can define a countable bad set $N_{1,x}$ outside of which we have $\tilde{f}_1(x,y) = f(x,y)$. For every $x\in G_1$ we have the decomposition of $g_x$ into its continuous and jump parts on $[-1,1]$, similarly for $h_y$ for all $y \in G_2$. Therefore for each $x$ such that $f$ is $BV$ on $\{(x,t) ; t\in [-1,1]\}$, we have a countable set $N_{1,x}$ such that for all $y\in[-1,1]\setminus N_{1,x}$ we have $\tilde{f}_1\cap\{(x,y,\er, \er)\} = \{(x,y,f(x,y))\}$. Similarly we define $N_{2,y}$ so that for all $x\in[-1,1]\setminus N_{2,y}$ we have $\tilde{f}_2\cap\{(x,y,\er, \er)\} = \{(x,y,f(x,y))\}$.

	For each $x\in G_1$ call 
	$$
		l^{f}_{1,x}(y) =  \frac{y+1 + |Dg_x|([-1,y))}{2+ |Dg_x|([-1,1])}.
	$$
	Thus $l^{f}_{1,x}$ is a strictly increasing function with values in $[0,1]$ and with jumps at each point $y\in N_{1,x}$. 
	Therefore for each $t \in l^{f}_{1,x}([-1,1]\setminus N_{1,x})$ we have a unique $y$ such that $l^{f}_{1,x}(y) = t$, call $(l^{f}_{1,x})^{-1}(t) = y$ for all $t \in l^{f}_{1,x}([-1,1]\setminus N_{1,x})$.
	Then we can define $\phi_{1,x}(t) = f(x,l^{-1}_{1,x}(t))$ for all $t\in l^{f}_{1,x}([-1,1]\setminus N_{1,x})$. We take a $y \in N_{1,x}$ and get a pair $a_{1,x,y} = \lim_{s \nearrow y}l^{f}_{1,x}(s)$ and $b_{1,x,y} = \lim_{s \searrow y}l^{f}_{1,x}(s)$. From the definition of $\phi_{1,x}$ we have that $\lim_{t\nearrow a_{1,x,y}}\phi_{1,x}(t) = A_{1,x,y}$ and $\lim_{t\searrow b_{1,x,y}}\phi_{1,x}(t) = B_{1,x,y}$. Then for $t\in [a_{1,x,y}, b_{1,x,y}]$ we define $\phi_{1,x}(t)$ as follows
	$$
		\phi_{1,x}(t) = \frac{A_{1,x,y}(b_{1,x,y}-t) + B_{1,x,y}(t-a_{1,x,y})}{b_{1,x,y} - a_{1,x,y}}.
	$$
	Thus defined $\phi_{1,x}$ is continuous. In fact $t \to (x,t, \phi_{1,x})$ is a bi-Lipschitz parametrization of the graph of $\tilde{f}_1$ restricted to $\{x\times[-1,1]\}$, i.e. $\{(x,t,\phi_{1,x}(t)); t\in[0,1]\} = \tilde{f}_1\cap (\{x\}\times \er^3)$. Similarly we construct $\phi_{2,y}$.

	\begin{definition}\label{good starting grid}[Good starting grid and geometrical representatives]
	Let the mapping $f\in BV(Q(0,1); Q(0,1))$, let $K \in \en$ and let $\{x_1,x_2 \dots, x_K\}\subset G_1$ with $\tfrac{1}{K}<x_{i+1} - x_i<\tfrac{4}{K}$ and $\{y_1, y_2 ,\dots, y_K\}\subset G_2$ with $\tfrac{1}{K}<y_{i+1} - y_i<\tfrac{4}{K}$ be a finite choice of horizontal and vertical coordinates be such that $g_{x_i}$ is continuous at each $y_j$, and $h_{y_j}$ is continuous at $x_i$ for all $i,j = 1 \ldots K$ (for existence see Corollary~\eqref{GridChooser2}). 
Then we call 
		$$
	\Gamma= \bigcup_{i=1}^K  \{x_i\} \times [-1,1]  \cup \bigcup_{j=1}^K  [-1,1] \times \{ y_j\}
		$$
		a \emph{good starting grid} for $f$. 
		We call $\gamma : \Gamma \to Q(0,1)$ the mapping defined on each segment $\{ (x_i,t) \in \Gamma,\, t \in [y_{j-1},y_j]  \}$ by
	\begin{equation}\label{GoingUp}
		\gamma(x_i,t) = \phi_{1,x_i}\Big( \tfrac{y_j -t}{y_j - y_{j-1}} (l^{f}_{1,x_i})^{-1}(y_{j-1}) + \tfrac{t-y_{j-1}}{y_j - y_{j-1}} (l^{f}_{1,x_i})^{-1}(y_{j})\Big).
	\end{equation}
	and on the segment $\{(t,y_j)\in \Gamma; t\in[x_{i-1}, x_i]\}$ by
	\begin{equation}\label{GoingRight}
		\gamma(t,y_j) = \phi_{2,y_j}\Big( \tfrac{x_i -t}{x_i - x_{i-1}} (l^{f}_{2,y_j})^{-1}(x_{i-1}) + \tfrac{t-x_{i-1}}{x_i - x_{i-1}} (l^{f}_{2,y_j})^{-1}(x_{i})\Big).
	\end{equation}
	the \emph{geometrical representative} of $f$ on $\Gamma$.
	\end{definition}
	
Given a good starting grid $\Gamma$ for $f$, we call the set
$$
	\bigcup_{i=1}^N \bigcup_{y \in \pi_2(V_i)} \big(x_i, y ,\tilde{f}_1(x_i,y)\big)\cup \bigcup_{j=1}^{M}\bigcup_{x \in \pi_1(H_j) } \big(x, y_j ,\tilde{f}_2(x,y_j)\big)
$$
the multifunction graph of $f$ on $\Gamma$, where $\pi_i(\cdot)$ is the usual projection on the coordinate axis. It is easy to observe that there exists a unique continuous bijection from the graph of the geometrical representative $\gamma$ of $f$ onto the multifunction graph of $f$ on $\Gamma$.
	
	\begin{definition}\label{DefNCBV}[No-Crossing $BV$ condition]
		We say that $f \in BV(Q(0,1), \er^2)$ with $f(x) = x$ on $\partial Q(0,1)$, satisfies the $NCBV$ condition if for every $\sigma >0$ and for every pair of good starting grid $\Gamma$ and respective geometrical representative $\gamma : \Gamma \to Q(0,1)$ of $f$ on $\Gamma$, there exists a continuous and one-to-one map $\gamma_{\sigma} : \Gamma \to Q(0,1)$ such that $\|\gamma - \gamma_{\sigma}\|_{L^\infty(\Gamma)}<\sigma$.
	\end{definition}

	\begin{remark}
		The definition of the NCBV condition is independent on a bilipschitz reparameterization of the grid $\Gamma$. Sepcifically let $\psi$ be a piecewise linear continuous and injective function mapping $\Gamma$ onto $\Gamma$. Then we can define $\phi_{\psi}$, a reparametrisation of $\gamma(\Gamma)$ as
		$$
			\phi_{\psi}(t) = \gamma(\psi(t))
		$$
		and similarly we define a reparametrisation of $\gamma_{\sigma}(\Gamma)$, which we will call $\phi_{\psi, \sigma}$ as
		$$
			\phi_{\psi, \sigma}(t) = \gamma_{\sigma}(\psi(t)).
		$$
		Obviously, since $\gamma_{\sigma}$ and $\psi$ are both injective we have that $\phi_{\psi,\sigma}$ is injective and
		$$
		\|\phi_{\psi} - \phi_{\psi, \sigma}\|_{\infty}<\sigma.
		$$
		That is to say, given a $f$ satisfying NCBV and any grid $\Gamma$ we can find an injectification of any reparametrization of the image of $\Gamma$.
	\end{remark}

	\begin{prop}\label{monotonicity}
		Let $f\in BV(Q(0,1),Q(0,1))$ satisfy the NCBV condition.
		Let $-1<x_1<x_2<1$ and $-1<y_1<y_2<1$ be such that $f_{|\{x_i\}\times[-1,1]}$ and $f_{|[-1,1]\times\{y_i\}}$ are BV functions. Call $R = \{(x,y); x_1\leq x\leq x_2, y_1\leq y \leq y_2\}$, then let $\Gamma$ be a good starting grid such that $\partial R \subseteq \Gamma$ and let $\gamma$ be the geometrical representative of $f$ on the good starting grid $\Gamma$. Then, for $\mathcal{L}^2$-almost every $(x,y)$ in the interior of $R$, it holds
		$$
			f(x,y) \in \cR
		$$
		where $\cR=\gamma(\partial R) \cup \{z\in Q(0,1); \deg(z, \gamma, R) \neq 0\}$.
	\end{prop}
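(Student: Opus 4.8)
\smallskip
\noindent The first point is that $\cR$ does not really depend on the grid. Both the set $\gamma(\partial R)$ and the parametrisation of $\gamma|_{\partial R}$ up to orientation–preserving reparametrisation — hence the function $z\mapsto\deg(z,\gamma,R)$ — are unchanged when $\Gamma$ is replaced by any finer good starting grid containing $\partial R$, because along each side of $\partial R$ the geometrical representative merely reparametrises the multifunction graph of $f$ on that side. Thus $\cR$ depends on $f$ only through $f|_{\partial R}$, it is closed, and $\R^2\setminus\cR$ is the union of those connected components $U_1,U_2,\dots$ of $\R^2\setminus\gamma(\partial R)$ on which the degree vanishes; since there are at most countably many of them, it suffices to show $\mathcal L^2\big(f^{-1}(U_j)\cap\operatorname{int}R\big)=0$ for each $j$. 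Throughout we discard a null set and work only with \emph{regular} points $(x,y)\in\operatorname{int}R$, i.e. those at which $f$ is $BV$ on both coordinate lines through $(x,y)$ and which are approximate continuity points with $f(x,y)=\tilde f_1(x,y)=\tilde f_2(x,y)$; by the $BV$–on–lines characterisation, Fubini's theorem and the argument of Lemma~\ref{GridChooser} these have full measure.

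\smallskip
\noindent The mechanism linking $f$ in $\operatorname{int}R$ to the boundary datum is this: given a regular point $(x,y)$, choose (via Corollary~\ref{GridChooser2}) a good starting grid $\Gamma'$ containing $\partial Q(0,1)$, $\partial R$ and the whole cross $\{x\}\times[-1,1]\cup[-1,1]\times\{y\}$, with geometrical representative $\gamma'$; then $\gamma'(x,y)=f(x,y)$ and $\cR$ is computed equally well from $\gamma'$. For $\sigma>0$ the $NCBV$ condition furnishes a continuous injection $\gamma'_\sigma\colon\Gamma'\to Q(0,1)$ with $\|\gamma'_\sigma-\gamma'\|_{L^\infty(\Gamma')}<\sigma$. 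Since $f=\operatorname{id}$ on $\partial Q(0,1)$, the Jordan curve $\gamma'_\sigma(\partial Q(0,1))$ is $\sigma$–close to $\partial Q(0,1)$; and as $\partial R$ is joined to $\partial Q(0,1)$ inside $\Gamma'$ by the segment $\{x\}\times[-1,1]$, whose image stays in $\overline{Q(0,1)}$, the Jordan curve $\gamma'_\sigma(\partial R)$ together with $\gamma'_\sigma$ of the cross lies in the interior of $\gamma'_\sigma(\partial Q(0,1))$. Finally, for $f(x,y)\notin\gamma(\partial R)$, stability of the degree under uniform perturbation and its invariance under reparametrisation of the boundary curve give, for $\sigma$ small,
$$
\deg\big(f(x,y),\gamma,R\big)=\deg\big(\gamma'_\sigma(x,y),\gamma'_\sigma,R\big),
$$
so the quantity to be controlled is the winding number of the Jordan curve $\gamma'_\sigma(\partial R)$ about the point $\gamma'_\sigma(x,y)$.

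\smallskip
\noindent Suppose now that for some $j$ the set $\mathcal B$ of regular points $(x,y)\in\operatorname{int}R$ with $f(x,y)\in U:=U_j$ had positive measure; fix a point of density $1$ of $\mathcal B$. By Fubini, for a positive–measure set of ordinates $c$ near its ordinate the horizontal slice sends almost every point of the corresponding segment inside $R$ into the zero–degree component $U$. Enlarging $\Gamma'$ to contain $\{y=c\}$ and using the degree transfer above, the arc $\gamma'_\sigma(\{y=c\}\cap R)$ — which is connected and meets the Jordan curve $\gamma'_\sigma(\partial R)$ only at its two endpoints $\gamma'_\sigma(x_1,c),\gamma'_\sigma(x_2,c)$ — lies in a single complementary component, and meeting the zero–degree exterior it lies \emph{entirely} in the exterior of $\gamma'_\sigma(\partial R)$. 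The same holds for $\gamma'_\sigma(\{x=d\}\cap R)$ for a positive–measure set of abscissae $d$. But along $\gamma'_\sigma(\partial R)$ the endpoint pairs $\{\gamma'_\sigma(x_1,c),\gamma'_\sigma(x_2,c)\}$ and $\{\gamma'_\sigma(d,y_1),\gamma'_\sigma(d,y_2)\}$ interleave (their cyclic order being inherited from $\partial R$), while injectivity of $\gamma'_\sigma$ forces these two exterior arcs to meet only at $\gamma'_\sigma(d,c)$; running this on an arbitrarily fine sub–grid of a neighbourhood of the density point, with the remaining ``outer'' pieces of the line images trapped in the annular region between $\gamma'_\sigma(\partial R)$ and $\gamma'_\sigma(\partial Q(0,1))$, yields an injective image of a fine grid whose large exterior part is incompatible with this interleaving — the desired contradiction. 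Hence $\mathcal L^2(\mathcal B)=0$ for each $j$, which is the assertion.

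\smallskip
\noindent The hard part is exactly this last, combinatorial–topological step: turning the interleaving of the grid edges along $\gamma(\partial R)$ and its injectification into a genuine planar obstruction. It is the $BV$ analogue of the monotonicity argument for the $NC$ condition in \cite{PP}, and the additional difficulty is that $\gamma(\partial R)$ need not be a Jordan curve — the geometrical representative can collapse onto itself where the multifunction graph of $f$ is degenerate — so the argument must be carried out on the injectified grid, with careful control of how the arcs there relate to those in $f(\partial R)$.
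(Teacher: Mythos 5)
Your strategy is essentially the same as the paper's — enlarge the good starting grid to contain the cross through $(x,y)$, injectify by NCBV, and locate the injectified image of $(x,y)$ relative to the Jordan curve $\gamma'_\sigma(\partial R)$ — but you wrap it in machinery the paper does not need. The paper argues pointwise: for $\mathcal{L}^2$-a.e.\ $(x,y)\in\ins R$ the enlarged grid is again a good starting grid (Corollary~\ref{GridChooser2}), the injectification $\tilde\phi_\sigma$ sends $(x,y)$ into the interior $U$ of the Jordan curve $\tilde\phi_\sigma(\partial R)$, and since $U\subset\cR+B(0,\sigma)$ (stability of winding number) while $|f(x,y)-\tilde\phi_\sigma(x,y)|<\sigma$, one lets $\sigma\to 0$ and uses only that $\cR$ is closed. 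Your decomposition of the complement into degree-zero components $U_j$, the positive-measure reductio with a density point, the Fubini slicing in both directions, and the ``arbitrarily fine sub-grid near the density point'' add nothing: one good cross per point already suffices, and there is no measure-theoretic amplification to be gained.

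The genuine gap is the step you yourself flag as ``the hard part'': you never prove that $\gamma'_\sigma(x,y)$ lies in the interior of $\gamma'_\sigma(\partial R)$, and that claim is the entire substance of the proposition. ``Interleaving'' alone is not an obstruction — two exterior crosscuts with interleaving endpoints on a Jordan curve, meeting once, are perfectly consistent with planarity. What forces the contradiction are the \emph{tails} of the cross, which by construction must terminate within $\sigma$ of $\partial Q(0,1)$. Concretely: if $q:=\gamma'_\sigma(x,y)$ were in the exterior of $J:=\gamma'_\sigma(\partial R)$, the crosscut $\gamma'_\sigma([x_1,x_2]\times\{y\})$ together with one $J$-subarc between its endpoints bounds a lens $E_1\subset\operatorname{ext}J$; by the cyclic order on $\partial R$ exactly one of $\gamma'_\sigma(x,y_1)$, $\gamma'_\sigma(x,y_2)$ lies on the $J$-part of $\partial E_1$, and the tail of the vertical line emanating from it (e.g.\ $\gamma'_\sigma(\{x\}\times[-1,y_1])$) is, by injectivity, disjoint from $J$ and from the crosscut, hence trapped in $E_1$ — yet it must reach a $\sigma$-neighbourhood of $\partial Q(0,1)$. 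This is exactly the mechanism the paper compresses into one sentence (``the curve \dots ends in $\partial Q(0,1)$''). Until that step is carried out, the proposal does not actually establish the proposition.
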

	\begin{proof}
	
		Let us choose any $(x,y)$ such that $f$ is BV on $\{x\}\times[-1,1]$ and $[-1,1]\times \{y\}$ and $f_{|\{x\}\times[-1,1]\cup[-1,1]\times \{y\}\cup \Gamma}$ is continuous at all the crosses of $\{x\}\times[-1,1]\cup[-1,1]\times \{y\}\cup \Gamma$. By Corollary~\ref{GridChooser2} this is almost all $(x,y)$ on the interior of $R$. We denote $\tilde{\Gamma}:= \Gamma \cup \{x\}\times[-1,1] \cup [-1,1]\times \{y\}$, which is again a good starting grid, and by $\tilde{\gamma}$ we denote the geometric representative of $f$ on $\tilde{\Gamma}$. Then by the NCBV property we have, for every $\sigma>0$, an injective continuous uniform approximation of $\tilde{\gamma}$ on $\tilde{\Gamma}$ called $\tilde{\phi}_{\sigma}$.
		
		Of course $\tilde{\phi}_{\sigma}(\partial R)$ is a Jordan curve in $Q(0,1)$. Obviously for every $\sigma>0$ it holds that $\tilde{\phi}_{\sigma}(x,y)$ lies in the interior of $\tilde{\phi}_{\sigma}(\partial R)$ (call this set $U$). This is because any point in $\partial Q(0,1)$ lies outside the Jordan curve and the curve $\tilde{\gamma}(\{x\}\times[y,1])$ starts in $\tilde{\phi}_{\sigma}(x,y)$ and ends in $\partial Q(0,1)$.
		
		It holds that $U \subset \cR+B(0,\sigma)$ and $f(x,y)\in B(\tilde{\phi}_\sigma(x,y), \sigma)$ and therefore $f(x,y)\in\cR+B(0,2\sigma)$ for all $\sigma>0$. Since $\cR$ coincides with the closure of the the bounded domain identified by the Jordan curve $\gamma(\partial R)$, it holds that $f(x,y) \in \cR$. 
	\end{proof}

	\begin{definition}\label{good arrival grid}[Good arrival grids]
		Let the mapping $f\in BV(Q(0,1); Q(0,1))$ and let $\Gamma$ be a good starting grid for $f$ and let $\gamma$ be the geometrical representative of $f$ on $\Gamma$. Let $\eta>0$, and some coordinates $-1=x_0<x_1<x_2\, \cdots\, ,\,< x_N=1$ and $-1=y_0<y_1<y_2\, \cdots < y_M=1$ with $x_{n+1}-x_n<\eta$ and $y_{m+1}-y_m<\eta$ for every $0\leq n< N$ and $0\leq m<M$, we say that
		\begin{equation}\label{defgag}
		\G = \bigcup_{n=1}^N \{x_n\}\times [-1,1]\ \cup\ \bigcup_{m=0}^M [-1,1] \times \{y_m\}\subseteq Q(0,1)
		\end{equation}
		is a \emph{good arrival grid for $f$ associated with $\Gamma$ and with side-length $\eta$} if $\gamma^{-1}(\G)\cap \Gamma \cap\textit{ int}\, Q(0,1)$ is a set of finitely many points $p$, each of which satisfies the following
		\begin{itemize}
			\item[$\cdot$] $p$ is not a cross of the grid $\Gamma$,
			\item[$\cdot$] $\gamma(p)$ is not a cross of the grid $\G$,
			\item[$\cdot$] $p$ is a point where the (strong 1-dimensional) derivative $D_{\tau}\gamma$ (w.r.t $\Gamma$) exists,
			\item[$\cdot$] $D_\tau\gamma(p)$ is not parallel to the side of $\G$ containing $\gamma(p)$.
		\end{itemize}
	\end{definition}

	An important fact is that good arrival grids always exist. More precisely, we have the following property, whose proof is a simple variant of the proof of~\cite[Lemma~3.6]{PP} and can be found in~\cite[Lemma~4.4]{CPR}. 
	
	\begin{lemma}\label{ArrivalGrid}
Let $f\in BV(Q(0,1); Q(0,1))$ and let $\Gamma$ be a good starting grid for $f$ in $Q(0,1)$. Then the geometrical representative $\gamma$ of $f$ on $\Gamma$ is in $W^{1,1}(\Gamma, Q(0,1))$. Moreover, there exists $\bar{\eta} = \bar{\eta}(L) >0$ such that for any $0 < \eta < \bar \eta$ and any $\Sigma \subset \Gamma$ $\H^1$-negligible set, there exists a good arrival grid $\G$ for $f$ associated with $\Gamma$, with side-length $\eta$, and such that $\gamma^{-1}(\G)\cap \Sigma=\emptyset$.
	\end{lemma}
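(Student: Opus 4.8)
The proof proceeds along the lines of \cite[Lemma~3.6]{PP}, in the variant recorded in \cite[Lemma~4.4]{CPR}; the only point that genuinely has to be re-examined in passing from the Sobolev to the $BV$ framework is that the geometrical representative $\gamma$ is still regular enough — namely $W^{1,1}$ on $\Gamma$ with total length controlled by the variations of the one-dimensional slices of $f$ — for the measure-theoretic selection of the arrival grid to carry over verbatim. I would therefore first prove the $W^{1,1}$ assertion and then run that selection.

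For the first part, fix an edge of $\Gamma$, say $S=\{x_i\}\times[-1,1]$, and recall that $t\mapsto\phi_{1,x_i}(t)$ is the bi-Lipschitz parametrisation over $[0,1]$ of the continuous, jump-free geodesic filling of the slice $g_{x_i}$. On each of the finitely many subintervals $[y_{j-1},y_j]$ into which the horizontal lines of $\Gamma$ cut $S$, \eqref{GoingUp} exhibits $\gamma|_S$ as $\phi_{1,x_i}$ precomposed with a piecewise-affine, increasing, continuous change of variable of $[-1,1]$ onto $[0,1]$ (the nodal interpolation of $l^f_{1,x_i}$), whose slope is bounded by a multiple of $K$ since consecutive nodes of $\Gamma$ are more than $1/K$ apart; hence $\gamma|_S$ is Lipschitz on each subinterval and matches continuously at the nodes $(x_i,y_j)$ and at $(x_i,\pm1)$ (using that $g_{x_i}$ is continuous there and $f=\id$ on $\partial Q(0,1)$). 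Being a genuine reparametrisation of $\phi_{1,x_i}$ over its whole range, $\gamma|_S$ has length equal to the jump-including variation $|Dg_{x_i}|([-1,1])$; an identical argument applies to horizontal edges. Summing over the finitely many edges,
\[
\int_\Gamma|D_\tau\gamma|=\sum_{i=1}^K|Dg_{x_i}|([-1,1])+\sum_{j=1}^K|Dh_{y_j}|([-1,1])=:L<\infty ,
\]
since $x_i\in G_1$ and $y_j\in G_2$, so $\gamma\in W^{1,\infty}(\Gamma,Q(0,1))\subset W^{1,1}(\Gamma,Q(0,1))$ and $D_\tau\gamma$ exists $\H^1$-almost everywhere on $\Gamma$.

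For the second part, with $\bar\eta=\bar\eta(L)$ the threshold of \cite[Lemma~3.6]{PP}, for $\eta<\bar\eta$ one may fix $N,M$ and nominal positions $-1=\xi_0<\dots<\xi_N=1$, $-1=\zeta_0<\dots<\zeta_M=1$ with all gaps $<\eta$ and a fixed margin of freedom around each interior node. Writing $\gamma=(\gamma^1,\gamma^2)$, one selects the interior $x_n$'s and $y_m$'s inside those margins so as to avoid four $\L^1$-negligible families of values: the first and second coordinates of $\gamma(\Sigma\cup E\cup C)$, where $C$ is the finite set of crosses of $\Gamma$ and $E$ the $\H^1$-null set where $D_\tau\gamma$ fails to exist (negligible because $\gamma$ is Lipschitz on $\Gamma$ and $\pi_1,\pi_2$ are $1$-Lipschitz); the values for which $\gamma^1$, resp.\ $\gamma^2$, has an infinite level set on some edge of $\Gamma$ (negligible by Banach's indicatrix theorem, using $\int_\Gamma|D_\tau\gamma|<\infty$); the values that $\gamma^1$, resp.\ $\gamma^2$, attains at an edge point where the corresponding tangential derivative vanishes (negligible by the one-dimensional Morse--Sard theorem for absolutely continuous functions); and, since $\gamma(\Gamma)$ has finite length so that for a.e.\ axis-parallel line its intersection with $\gamma(\Gamma)$ is finite, one further requires, after the $y_m$'s are chosen, that the $x_n$'s avoid the finite set $\bigcup_m\pi_1\big(\gamma(\Gamma)\cap(\R\times\{y_m\})\big)$, and symmetrically. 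These choices are possible because each margin is an interval of positive length from which one deletes a null set; setting $x_0=y_0=-1$, $x_N=y_M=1$ and forming $\G$ as in \eqref{defgag}, the first family yields $\gamma^{-1}(\G)\cap\Sigma=\emptyset$ and that $\gamma^{-1}(\G)\cap\Gamma\cap\ins Q(0,1)$ avoids the crosses of $\Gamma$ and the set $E$, the second that this intersection is finite, the third that $D_\tau\gamma$ at each of its points is not parallel to the side of $\G$ through the image point, and the last (together with the second) that no such image point is a cross of $\G$; thus $\G$ is a good arrival grid for $f$ associated with $\Gamma$, of side-length $\eta$, with $\gamma^{-1}(\G)\cap\Sigma=\emptyset$.

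The bulk of this is the selection argument of \cite[Lemma~3.6]{PP} / \cite[Lemma~4.4]{CPR}, which goes through unchanged once $\gamma\in W^{1,1}(\Gamma,Q(0,1))$ with finite total length is known; consequently the step I expect to be the real (if modest) obstacle is the first one, i.e.\ checking that the $BV$ geometrical representative — a gluing of affinely reparametrised geodesic fillings of the jump sets of the slices of $f$ — is Lipschitz on the edges of $\Gamma$ with length exactly the slice variation, which is precisely what makes Banach's indicatrix theorem and the coarea inequality applicable.
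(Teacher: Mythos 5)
Your proposal is correct and follows essentially the same route as the proof the paper defers to, namely \cite[Lemma~3.6]{PP} as adapted in \cite[Lemma~4.4]{CPR}: one first verifies that the geometrical representative $\gamma$ is Lipschitz on $\Gamma$ (because each $\phi_{1,x_i}$ is Lipschitz with constant $2+|Dg_{x_i}|([-1,1])$ and the nodal reparametrisation from \eqref{GoingUp} has slope at most $K$), hence a fortiori $W^{1,1}$ with $\int_\Gamma|D_\tau\gamma|=\sum_i|Dg_{x_i}|([-1,1])+\sum_j|Dh_{y_j}|([-1,1])=L$, and then performs the standard measure-theoretic selection of the arrival coordinates to dodge the finitely many $\mathcal L^1$-null exceptional families you list. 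The paper gives no in-house proof, but your reconstruction matches the cited argument and correctly isolates the Lipschitz/arclength property of the BV geometrical representative as the only point requiring re-verification relative to the Sobolev case.
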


\section{Proof of Theorem~\ref{main}}

 \subsection{Strict convergence implies NCBV}

In this section we deal with the simpler implication of Theorem \ref{main}, namely we prove the following statement.

If $f_k,\, f \in BV(Q(0,1), \er^2)$ coinciding with the identity on $\partial Q(0,1)$ and $f_k$ are homeomorphisms such that 
\[ 
(|D_1f_k| + |D_2f_k|)(Q(0,1)) \longrightarrow ( |D_1f|+ |D_2f|)(Q(0,1)) \quad \mbox{ as $k \to \infty$,}
\]
then $f$ satisfies NCBV in the sense of Definition \ref{DefNCBV}. 

The proof uses elements from \cite[Proposition 4.1]{CHKR}. We modify those elements in the proof rather than just referring to them.

\begin{proof}
Thanks to Proposition \ref{Proposition 2.3}, the sequence $f_k$ converges strictly on almost every line parallel to coordinate axes. Call $G_1$ the set of $x$-coordinates such that $f_k$ converge on the lines $\{x\}\times[-1,1]$ for $x\in G_1$ and call $G_2$ the set of $y$-coordinates such that $f_k$ converge on the lines $[-1,1]\times\{y\}$ for $y\in G_2$.

Take any finite selection $x_1,x_2,\dots x_n \in G_1$ and $-1 = x_0<x_1<\dots < x_n <x_{n+1} = 1$ and $y_1,y_2, \dots y_m \in G_2$ and $-1=y_0< y_1 < \dots < y_m < y_{m+1} = 1$ in such a way (see Corollary~\ref{GridChooser2}) that $f$ is continuous at each $(x_i, y_j)$ with respect to the set $\Gamma$ which we define as
$$
\Gamma  = \bigcup_{i=1}^n \{x_i\}\times [-1,1] \cup \bigcup_{j=1}^m[-1,1]\times \{y_j\}.
$$

	Let $\sigma > 0$ be a fixed positive number. We separate the jumps of $f_{|\Gamma}$ into two categories small $\mathcal{S} = \{(x,y); |D_{\tau}f|(x,y) < \sigma/5 \}$ and big $\mathcal{B} = \{(x,y); |D_{\tau}f|(x,y) \geq \sigma/5 \}$, where $D_{\tau}$ is the derivative in the tangential direction to $\Gamma$ at $(x,y)$ (at $(x_i,y_j)$ vertices of $\Gamma$ both $|D_1 f|(x_i,y_j) = |D_2 f|(x_i,y_j) = 0$ because $f$ is continuous at the vertices). Notice that the set $\mathcal{B}$ is finite because $f_{|\Gamma}$ is BV on $\Gamma$.
	
	For each point in $(x,y_j) \in \mathcal{B}$ (the case for $(x_i,y)\in \mathcal{B}$ is similar) we define a segment containing $(x,y_j)$ as follows. Let $0\leq i \leq n$ be such that $x_i<x<x_{i+1}$. We choose two points $x^-, x^+$ such that $x_i <x^-<x<x^+<x_{i+1}$. Since there are a finite number of such $(x,y_j) \in \mathcal{B}$ we can choose $x^-,x^+$ such that the segments $[(x^-,y_j)(x^+,y_j)]$ are pairwise disjoint. Further we may assume that $f$ is continuous at $x^-$ and $x^+$ becasue $f$ is continuous at almost every point of $[-1,1] \times \{y_j\}$. Since
	$$
		\lim_{t\nearrow x}|D_1f|((t,x)\times\{y_j\}) = \lim_{t\searrow x}|D_1f|((x,t)\times\{y_j\}) = 0
	$$
	we may also assume that
	$$
		|D_1f|((x^-,x)\times\{y_j\}) <\frac{\sigma^3}{100} \quad \text{and} \quad|D_1f|((x,x^+)\times\{y_j\}) <\frac{\sigma^3}{100}.
	$$
	Because $|D_1f|(x^{\pm},y_j) = 0$ we have that $(f_k) |_{(x^-,x^+)\times\{y_j\}}$ converges strictly on $(x^-,x^+)\times\{y_j\}$ by Lemma~\ref{Owens}.
	
	After subtracting these segments which contain all the `big' jumps, we are left with the remaining part of $[-1,1]\times\{y_j\}$, which contains only small jumps which have size less than $\sigma/5$. Therefore we split the remaining part of $[-1,1]\times\{y_j\}$ into segments $(x^-,x^+)\times \{y_j\}$ such that $|D_1f|((x^-,x^+)\times\{y_j\})<\sigma/4$. Thus on each $[-1,1]\times\{y_j\}$ we have a finite number of open segments (and similarly on each $\{x_i\}\times[-1,1]$) which cover $[-1,1]\times\{y_j\}$ (resp. $\{x_i\}\times[-1,1]$) except for a finite number of endpoints at which $f$ is continuous. There exists a minimum number $d>0$ such that the length of any segment we chose on the grid $\Gamma$ is at least $d$.
	
Observe that Lemma~\ref{Owens} ensures that $f_k$ converges to $f$ strictly on each of the chosen segments because, by construction, $f$ is always continuous at the endpoints. In particular, this means we have $\|f_k - f\|_{L^1(I)}\to 0$ where $I$ is any one of the chosen segments.

Let us first concentrate on a chosen segment $I$ not containing any big jump, hence $I \cap \mathcal{B} = \emptyset$.
Since $|D_\tau f|(I) < \sigma/4$ and $f_k$ is strictly converging to $f$ on $I$, there exists some $k_0$ such that $|D_{\tau}f_k|(I) < \sigma/3$ and $\|f_k - f\|_{L^1(I)} < d \sigma/4$ for any $k \geq k_0$. Since the number of segements $I$ is finite we may assume that the threshold $k_0$ is such that the above holds for all the segments $I$ not containing big jumps simultaneously. Since $\mathcal{H}^1(I)\geq d$, we deduce from $\|f_k - f\|_{L^1(I)} < d \sigma/4$ that $\{|f_k - f| \leq \sigma/4\} \neq \emptyset$ for all $k\geq k_0$. Therefore 
	$$
	\| f_k - f \|_{L^{\infty}(I)} \leq \sigma/4 +  \sigma/4+  \sigma/3  < \sigma
	$$
	because there is at least a point $(a,b) \in I$ for which $|f_k(a,b) - f(a,b)| < \sigma/4$, moreover $|D_\tau f|(I) < \sigma/4$, $|D_\tau f_k|(I) < \sigma/3$ for $k\geq k_0$. 
	
	We denote $\gamma$ the constant speed parametrization of the geometric representative of $f$ on $I$ and $\gamma_k$ the constant speed parametrization of $f_k(I)$. Then for $I$ which do not contain big jumps we have
	\begin{equation}\label{SmallJumps}
		\|\gamma_k - \gamma\|_{L^{\infty}(I)} < 2 \sigma.
	\end{equation}
	
	Let us now focus on a chosen segment $I= (x^-,x^+)\times\{y_j\}$ which interescts $\mathcal{B}$, thus $I \cap \mathcal{B} = \{(x,y_j)\}$ is a big jump for $f$. 
	By the strict convergence on $I$ (and up to further increase $k_0$,) we have that
	\begin{equation}\label{WakeUp}
		\begin{aligned}
		|D_1f_k|((x^-,x^+)\times\{y_j\}) 
		&\leq |D_1f|((x^-,x^+)\times\{y_j\}) + \sigma^3/100\\
		& \leq |D_1f|(\{(x,y_j)\}) + \sigma^3/25
		\end{aligned}
	\end{equation}
	for $k\geq k_0$. Indeed, $|D_1f|((x^-,x)\times\{y_j\}) < \sigma^3/100$ and $|Df|((x,x^+)\times\{y_j\}) < \sigma^3/100$. Let now $w^-, w^+$ be such that $x^-<w^-<x<w^+<x^+$ and $f$ is continuous at $w^-$ and $w^+$. Then $f_k$ converges strictly to $f$ on $(x^-,w^-)$ and $(w^+,x^+)$ by Lemma~\ref{Owens} and, since $(x^-,w^-) \subset (x^-,x)$ (resp. $(w^+,x^+) \subset (x,x^+)$), if $k_0$ is big enough then $|D_1 f_k|((x^-,w^-)\times\{y_j\}) < \sigma^3/50$ (resp. $|D_1 f_k|((w^+,x^+)\times\{y_j\}) < \sigma^3/50$) for all $k\geq k_0$.
	 Moreover, the segments $(x^-,w^-)$ and $(w^+,x^+)$ do not intersect $\mathcal{B}$ and (up to increase $k_0$) the above argument ensures that there exist some points $a_k\in (x^-,w^-)$ and $b_k\in(w^+,x^+)$, with $f |_{\Gamma}$ continuous at $(a_k,y_j)$ and $(b_k,y_j)$, such that
	\begin{equation}\label{GrapeVine}
		|f_k(a_k,y_j) - f(a_k,y_j)|< \sigma^3 /100 \quad \text{ and } \quad |f_k(b_k,y_j) - f(b_k,y_j)|< \sigma^3 /100.
	\end{equation}
	Therefore 
	$$
		\left|f_k(a_k,y_j) - \lim_{t\nearrow x} f(t,y_j) \right| \leq |f_k(a_k,y_j) - f(a_k,y_j)| + |D_{1}f|((x^-,x)\times\{y_j\}) \leq \sigma^3 /50
	$$
	thus also
	\begin{equation}\label{BigBrassBed}
		\begin{aligned}
			|f_k(x^-,y_j) - \lim_{t\nearrow x}f(t,y_j)| \leq \sigma^3 /50+|D_1f_k((x^-,w^-)\times\{y_j\})| \leq \sigma^3 /25.
		\end{aligned}
	\end{equation}
	A similar argument holds for $|\lim_{t \searrow x} f(t,y_j) - f_k(x^+,y_j)|$.

	Let now $\gamma$ be the constant speed parametrization of the geometric representative of $f$ on $(x^-,x^+)\times \{y_j\}$ and let $\gamma_k$ be the constant speed parametrization of the curve $f_k((x^-,x^+)\times \{y_j\})$. Then both $\gamma$ and $\gamma_k$ are curves parametrized at constant speed from points $C =f(x^-)$ (resp $C_k = f_k(x^-)$) to $D = f(x^+)$ (resp $D_k = f_k(x^+)$). Thanks to \eqref{BigBrassBed}, we know that the points $C,C_k$ belong to $B(A,\sigma^3/25)$ where $A = \lim_{t\nearrow x}f(t,y_j)$ and $D,D_k \in B(B,\sigma^3/25)$ where $B = \lim_{t\searrow x}f(t,y_j)$. Moreover, from $|D_1f|((x^-,x)\times\{y_j\}) < \sigma^3/100$, $|Df|((x,x^+)\times\{y_j\}) < \sigma^3/100$ and $(x,y_j) \in \mathcal{B}$, we deduce that $l(\gamma |_{(x^-,x^+)\times \{y_j\}}) \leq |A-B| + \sigma^3 / 25$ and from \eqref{WakeUp} we also conclude that $l(\gamma_k |_{(x^-,x^+)\times \{y_j\}}) \leq  |A-B| + \sigma^3 / 25$ for all $k \geq k_0$. 
	
On the other hand, the fact that $(x,y_j) \in \mathcal{B}$ ensures that $|A-B| \geq \sigma / 5$ thus also $l(\gamma |_{(x^-,x^+)\times \{y_j\}}),\,  l1(\gamma_k |_{(x^-,x^+)\times \{y_j\}}) \leq |A-B|(1+ \eps^2/5)$. 

Therefore, by Lemma~\ref{Inevitable} we have
	\begin{equation}\label{BigJumps}
		\|\gamma - \gamma_k\|_{L^{\infty}((x^-,x^+)\times \{y_j\})} < 6\sqrt{2\sigma^2/5+\sigma^2/5} |A-B| \leq 24\sigma,
	\end{equation}
	where in the last estimate we used that $|A-B|<4$.
	
	Gathering together \eqref{SmallJumps} and \eqref{BigJumps} and reparametrizing $\gamma$ by constant speed parametrization on each segment $(x_i,x_{i+1})\times\{y_j\}$ and each $\{x_i\}\times (y_j, y_{j+1})$, we get a new $\gamma_k$ and a new $\gamma$ satisfying
	$$
		\|\gamma_k - \gamma\|_{L^\infty(\Gamma)} < 24\sigma.
	$$

	Since $f_k$ is a homeomorphism, $\gamma_k$ is injective and continuous and hence $f$ satisfies the NCBV condition.
\end{proof}

\subsection{Approximation of NCBV map}
Now let us take any $f\in BV(Q(0,1), \er^2)$ satisfying the NCBV condition and coinciding with the identity on $\partial Q(0,1)$, we prove that it can be approximated by $BV$ homeomorphisms in the sense of \eqref{Manhattan strict convergence}. 

\begin{proof}
\step{1}{Choice of a good starting grid with oscillation estimates}{start}

Let $\epsilon>0$ (without loss of generality we may assume that $\epsilon<\tfrac{1}{100}$) and find a $K\in \en$ such that
$$
	\int_{-1+\tfrac{2}{K+1}(m-1)}^{-1+\tfrac{2}{K+1}m} |Df_1|([-1,1]\times\{y\}) \, dy < \epsilon^2
$$
for all $m=1, 2, \dots, K+1$ and
$$
\int_{-1+\tfrac{2}{K+1}(j-1)}^{-1+\tfrac{2}{K+1}j} |Df_2|(\{x\}\times[-1,1]) \, dx < \epsilon^2
$$
for all $j=1, 2, \dots, K+1$ by the absolute continuity of the integral. Using Corollary~\ref{GridChooser2} we find a grid
$$
	\Gamma= \bigcup_{i=1}^K \{x_i\}\times [-1,1] \cup \bigcup_{j=1}^K[-1,1]\times \{y_j\}
$$
such that $x_j,y_j\in [-1+\tfrac{2}{K+1} (j -\tfrac{3}{4}), -1+\tfrac{2}{K+1} (j-\tfrac{1}{4})]$ with $f_{|\Gamma}$ continuous at each $(x_j,y_m)$. Moreover by a simple argument of averages we choose the ordinates so that for each $j,m=1,2,\dots K$ it holds that
$$
|D_1f|([-1,1]\times\{y_m\}) \leq 2(K+1)\int_{-1+\tfrac{2}{K+1}(m-1)}^{-1+\tfrac{2}{K+1}m} |Df_1|([-1,1]\times\{y\}) \, dy < 2(K+1)\epsilon^2
$$
and
$$
|D_2f|(\{x_j\}\times[-1,1]) \leq 2(K +1)\int_{-1+\tfrac{2}{K+1}(j-1)}^{-1+\tfrac{2}{K+1}j} |Df_2|(\{x\}\times[-1,1]) \, dx < 2(K+1)\epsilon^2.
$$

Denoting for simplicity $y_0 = -1$ and $y_{K+1}= 1$, from
$$
|D_2f|(\{x_j\}\times[-1,1]) = \sum_{m=0}^K|D_2f|(\{x_j\}\times[y_m, y_{m+1}])<2(K+1)\epsilon^2
$$
it follows that the number of intervals $[y_m, y_{m+1}]$ such that $|D_2f|(\{x_j\}\times[y_m, y_{m+1}])> \epsilon$ is at most $\left \lfloor{2( K+1)\epsilon}\right \rfloor$. That is to say that the number of rectangles in a column with a vertical side whose image has length greater than $\epsilon$ is at most $\left \lfloor{4(K+1)\epsilon}\right \rfloor$, where $\left \lfloor{a}\right \rfloor$ denotes the integer part of $a$.
The same holds for horizontal rows and, being $(K+1)$ the total number of such rows (or columns), we get
\begin{equation}\label{SmallOscillation}
\diam f_{|\partial R} < 4\epsilon \text{ for all but an $\left \lfloor{8(K+1)^2\epsilon}\right \rfloor$ number of rectangles $R$ of $\Gamma$}.
\end{equation}
On the remaining \emph{bad} rectangles $R$ of $\Gamma$ we can use the following estimate which is always true 
\begin{equation}\label{BigOscillation}
\diam f_{|\partial R} < 4.
\end{equation}
Note that the measure of each such rectangle is bounded by $\frac{16}{(K+1)^2}$ and so 
\begin{equation}\label{Theatre}
\text{the measure of the union of the\emph{ bad} rectangles is at most } C\epsilon.
\end{equation}

 \step{2}{Choice of guidelines}{guide}

Taking $\Gamma$, the good starting grid chosen in step~\ref{start}, we denote by
$$
	H_2 =  \left\lbrace y \in [-1,1]: \exists x_j : |D_2 f|(x_j,y)\geq\frac{\epsilon}{K+1} \right\rbrace.
$$
Similarly we denote
$$
	H_1 = \left\lbrace x \in [-1,1]: \exists y_m : |D_1 f|(x,y_m)\geq\frac{\epsilon}{K+1} \right\rbrace.
$$
Then the set $H=(H_1\times[-1,1]\cup[-1,1] \times H_2)\cap \Gamma$ contains the coordinates of all ``big'' jumps on the grid $\Gamma$. Since we chose the grid so that $f$ is $BV$ on all lines of $\Gamma$, we immediately see that $H$ is finite. For each $x\in H_1$ we choose an interval $(x-s_x, x+s_x)$ and for each $y\in H_2$ we choose an interval $(y-s_y, y+s_y)$, so that the numbers $s_x,s_y>0$ are so small that the segments $[x-s_x,x+s_x]\times\{y_m\}$, $m=0, 1, \dots K+1$ and $\{x_j\}\times[y-s_y, y+s_y]$, $j=0,1,\dots K+1$ satisfy the following properties
\begin{enumerate}
	\item[i)] each segment contains exactly one $(x,y) \in H$
	\item[ii)] the segments have pairwise positive distance from each other
	\item[iii)] the segments do not contain any vertex $(x_j,y_m)$ of $\Gamma$
	\item[iv)] the $\mathcal{H}^1$ measure of the union of the segments is bounded by $\frac{\epsilon}{(K+1)^2|D_{\tau}f|(\Gamma)}$.
\end{enumerate}
We call $E$ the finite union of these segments.

We now add more lines to the grid $\Gamma$ that will be useful in the sequel to obtain necessary estimates for the injectification. 
These extra lines are called horizontal and vertical \emph{guidelines}. 

Consider a rectangle $R = (x_j,x_{j+1})\times(y_m,y_{m+1})$ of $\Gamma$, then we cover $\partial R \setminus E$ with corresponding pairs of segments $J_{R,i}^-$ and $J_{R,i}^+$, where $J_{R,i}^-$ is on the left (resp. lower) side of $R$ and $J_{R,i}^+$ is on the right (resp. upper) side of $R$. We require that $\pi_i(J_{R,i}^{-})=\pi_i(J_{R,i}^{+})$, where $\pi_i$ is the projection in the direction perpendicular to $J_{R,i}^{\pm}$ and further that $|D_1f|(J_{R,i}^{\pm}) < 2\frac{\epsilon}{K+1}$ if $J_{R,i}^{\pm}$ are horizontal and $|D_2f|(J_{R,i}^{\pm}) < 2\frac{\epsilon}{K+1}$ if $J_{R,i}^{\pm}$ are vertical. 
Now, for each pair of horizontal $J_{R,i}^{\pm}$, we choose a vertical line $\{t_{R,i}\}\times[-1,1]$ intersecting both $J_{R,i}^{\pm}$. Similarly for each pair of vertical $J_{R,i}^{\pm}$ we choose a horizontal line $[-1,1]\times\{t_{R,i}\}$ intersecting $J_{R,i}^{\pm}$ such that $f$ is continuous at the mutual intersections of these lines and at their intersections with $\Gamma$. Since, in the terminology of the proof of Lemma~\ref{GridChooser}, the lines in $\Gamma$ have ordinates chosen from the set $(A \setminus \tilde{A})$ and $(B \setminus \tilde{B})$ respectively, Corollary~\ref{GridChooser2}, gives that almost any selection for the values $t_{R,i}$ is acceptable. 

Therefore we can always choose $t_{R,i}$ so that the respective line is almost minimizing the total variation of $f$ in the strip corresponding to $J_{R,i}^{\pm}$. More precisely, if $J_{R,i}^{\pm}$ were vertical in the rectangle $R$, then we can choose $t_{R,i}$ so that
\begin{equation}\label{GlDefH}
|D_1 f|((x_j,x_{j+1})\times\{t_{R,i}\})\leq  |D_1 f|((x_j,x_{j+1})\times\{t\})+\frac{\epsilon}{K+1} 
\end{equation}
for almost every $(x_j,t)\in J_{R,i}^-$. In this case the line $[-1,1]\times\{t_{R,i}\}$ is called \emph{horizontal guideline} for $R$.

Similarly, if $J_{R,i}^{\pm}$ were horizontal in $R$, then we consider $t_{R,i}$ so that
\begin{equation}\label{GlDefV}
|D_2 f|(\{t_{R,i}\}\times(y_m,y_{m+1}))\leq  |D_2 f|(\{t\}\times (y_m,y_{m+1}))+\frac{\epsilon}{K+1} 
\end{equation}
for almost every $(t,y_m)\in J_{R,i}^-$ and the line $\{t_{R,i}\}\times[-1,1]$ is called \emph{vertical guideline} for $R$.

We denote by  $\tilde \Gamma$ the grid formed by adding the horizontal and vertical guidelines of any rectangle to $\Gamma$. Observe, that $\tilde \Gamma$ is still a good starting grid in the sense of Definition \ref{good starting grid}. Then let $\gamma$ be the geometrical representation of $f$ on the grid $\tilde \Gamma$. We recall that, by definition, $\gamma$ coincides with $f$ at every intersection of two lines of $\tilde{\Gamma}$.

\step{3}{Choice of an arrival grid}{arrive}

Let $\tilde{\Gamma}$ and $\gamma$, be the good starting grid and the corresponding geometric representative of $f$ on $\tilde{\Gamma}$ defined in step~\ref{guide}. Then Lemma~\ref{ArrivalGrid} provides a good arrival grid associated with $\tilde{\Gamma}$, $\gamma$ with sidelength $\eta = \frac{\epsilon}{K+1}$ in the sense of Definition \ref{good arrival grid}.
 
\step{4}{Injectification and linearization of $\gamma$}{inject}

In this step we employ the good arrival grid provided by step~\ref{arrive} to linearize the injectification of $f |_{\tilde \Gamma}$. This concept was pioneered in \cite{PP}, in the present manuscript we adapt it to the $BV$ setting. 

By definition of good arrival grid $\mathcal{G}$, the set $P:=\gamma^{-1}(\mathcal G)\cap \tilde{\Gamma}$ is finite and does not contain any vertices of $\mathcal{G}$ or of $\tilde{\Gamma}$. 
Further for every point $(x,y)\in P$ it holds that the derivative of $\gamma$ tangential to $\tilde{\Gamma}$ (we denote it as $D_{\tau}\gamma$) has non-zero component perpendicular to the side of $\mathcal{G}$ containing $\gamma(x,y)$ (the existence of $D_{\tau}\gamma$ at all points of $P$ is a requirement of the good arrival grid, see Definition \ref{good arrival grid}). Therefore there exists a smallest such perpendicular component $v>0$.

For each point $a\in P$ we have some $d_{a}>0$ such that when $|(x,y)-a|< d_{a}$ then
$$
	\gamma(x,y)-\gamma(a) - D_{\tau}\gamma(a)[(x,y)-a]< \frac{v}{3}|(x,y) - a|.
$$
As a consequence, the images through $\gamma$ of the endpoints of the segments $B(a, d_{a})\cap\tilde{\Gamma}$ have distance at least $\frac{vd}{2}$ from $\gamma(a)$, where we called $d = \min_{a\in P}d_{a} > 0$. Without loss of generality, we can assume that $B(a, d_{a})\cap\tilde{\Gamma}$ is a finite collection of segments which are pairwise disjoint and (up to decrease the values of $d_{a}$) do not contain any vertex of $\tilde{\Gamma}$. Moreover, since $\gamma$ is continuous and $P$ is finite, up to further decrease the values of $d_a$ (for example by a factor $2$), we deduce that if $c$ is an endpoint of any segment of $B(a, d_{a})\cap\tilde{\Gamma}$ then $|\gamma(c) - \gamma(a)| \geq \frac{vd}{2}$  for all $a \in P$.

On the other hand, being $\gamma \big(\tilde{\Gamma} \setminus \bigcup_A B(a, d_{a}) \big)$ a closed set, it follows that there exists a $\sigma_0>0$ such that
\begin{equation}\label{Marathon}
	\dist\bigg(\gamma \Big(\tilde{\Gamma} \setminus \bigcup_AB\big(a,d_{a}\big)\Big), \mathcal{G}\bigg) \geq 2\sigma_0.
\end{equation}
This immediately implies that for any $0<\sigma\leq \sigma_0$ and any $\tilde{\gamma}_{\sigma}$, continuous injectification of $\gamma$ with $\|\tilde{\gamma}_{\sigma} - \gamma\|_{L^\infty(\tilde{\Gamma})} \leq \sigma$, it holds that $\tilde{\gamma}_{\sigma}^{-1}(\mathcal{G})\subset \bigcup_A B\big(a,d_{a}\big) \cap \tilde{\Gamma}$.

	Let 
	\begin{equation}
		\delta'=\min\{|\mathbf{a}-\mathbf{b}|\colon \mathbf{a}\in \gamma(P),\, \mathbf{b} \textrm{ vertex of }\mathcal{G}\}     
	\end{equation}
	and  
	\begin{equation}
		\delta''=\min\{|\mathbf{a}-\mathbf{b}|\colon\,  \mathbf{a},\mathbf{b}\in \gamma(P)\}
	\end{equation}
	Finally we set 
	\begin{equation}
		\delta=\min\{\delta',\,\delta'', \sigma_0, \tfrac{1}{100}\}.
	\end{equation}
	Notice that $\delta$ is positive due to the properties of good arrival grid.
	Let 
	\begin{equation}\label{SigmaDef}
		0<\sigma\leq \frac{\epsilon^2\delta }{ 12(K+1)}
	\end{equation}
	Now we use the NCBV condition to  obtain an injective function $\tilde{\gamma}_{\sigma}:\tilde\Gamma\to Q(0,1)$ such that $\norm{\tilde{\gamma}_{\sigma}-\gamma}_{L^\infty(\tilde{\Gamma})}<\sigma$.

	We adjust the map $\tilde{\gamma}_{\sigma}$ as follows. For each $a\in P=\gamma^{-1}(\mathcal G)\cap \tilde{\Gamma}$ we find the first and last point (i.e. the points furthest away from $a$) on the segment $B(a, d_{a})\cap\tilde{\Gamma}$ (call them $a^-$ and $a^+$ respectively) such that $\tilde{\gamma}_{\sigma}(a^{\pm}) \in B(\gamma(a), 3\sigma)$. Notice that \eqref{Marathon} and the choice of $\sigma$ imply that  
	\[ \gamma \Big(\tilde{\Gamma} \setminus \bigcup_AB\big(a,d_{a}\big)\Big) \cap \Big( \bigcup_{a \in P} B(\gamma(a), 3\sigma) \Big) = \emptyset,  \] 
	hence we are sure that $\tilde{\gamma}_\sigma$ intersects $B(\mathbf{a}, 3\sigma)$ only on the segments  $B(a, d_{a})\cap\tilde{\Gamma}$, $a\in P$, $\gamma(a) = \mathbf{a}$.
	 We define
	$$
	\tilde{\tilde{\gamma}}_{\sigma}(t) = \frac{|t - a^+|}{|a^+ - a^-|}\tilde{\gamma}_{\sigma}(a^-) +\frac{|t - a^-|}{|a^+ - a^-|} \tilde{\gamma}_{\sigma}(a^+) \qquad \mbox{for all $t \in [a^-a^+]$ and $a \in P$}
	$$
	then we set
	$$
	\tilde{\tilde{\gamma}}_{\sigma}(t) ={\tilde{\gamma}}_{\sigma}(t) \qquad \mbox{otherwise on $\tilde{\Gamma} \setminus \bigcup_A [a^- a^+]$.}
	$$
	By construction, it follows that $\tilde{\tilde{\gamma}}_{\sigma}(t)$ is again injective and $\|\tilde{\tilde{\gamma}}_{\sigma} -\gamma\|_{L^\infty(\tilde{\Gamma})}\leq 7\sigma$. Since $\tilde{\tilde{\gamma}}_{\sigma}(a^-)$ and $\tilde{\tilde{\gamma}}_{\sigma}(a^+)$ must be separated by $\mathcal{G}$ there is exactly one point $\tilde{a}$ in each $[a^-a^+]$ which is mapped onto $\mathcal{G} \cap B(\mathbf{a}, 3\sigma)$.

	At this stage we use $\tilde{\tilde{\gamma}}_{\sigma}$ and the arrival grid $\mathcal{G}$ to define a piecewise linear map from $\tilde{\Gamma}$ to $\er^2$. We will call this map $\gamma_{\sigma}$. 
	We start by specifying the image, $\gamma_{\sigma}(\tilde{\Gamma})$. 
	For each line $[-1,1]\times\{y\}$ ( resp. $\{x\} \times[-1,1]$) contained in $\tilde{\Gamma}$ we have a finite number of points $\tilde{a}$ such that $\tilde{\tilde{\gamma}}_{\sigma}(\tilde{a}) \in \mathcal{G}$. 
	For each pair of adjacent $\tilde{a}_1, \tilde{a}_2$ for which $\tilde{\tilde{\gamma}}_{\sigma}(\tilde{a}_1),\tilde{\tilde{\gamma}}_{\sigma}(\tilde{a}_2)$ lie on two distinct sides of a rectangle in $\mathcal{G}$ we define the segment $S_{a_1,a_2} = [\tilde{\tilde{\gamma}}_{\sigma}(\tilde{a}_1),\tilde{\tilde{\gamma}}_{\sigma}(\tilde{a}_2)]$ where $a_1$ and $a_2$ are the unique points in $P$ for which $\tilde{a}_i \in B(a_i, d_{a_i})$.
	
	Let us now consider a pair of adjacent $\tilde{a}_1$ and $\tilde{a}_2$ for which $\tilde{\tilde{\gamma}}_{\sigma}(\tilde{a}_1),\tilde{\tilde{\gamma}}_{\sigma}(\tilde{a}_2)$ lie on the same side of a rectangle in $\mathcal{G}$. 
	Firstly notice that for any such pair $\tilde{a}_1$ and $\tilde{a}_2$ there exists an $0<\xi_{a_1,a_2}$ so small that the generalized segments (see Definition \ref{generalized segments}) with $\xi = \xi_{a_1,a_2}$ intersect only those previously defined straight segments $S_{a_3,a_4}$ for which $\tilde{\tilde{\gamma}}_{\sigma}([\tilde{a}_1,\tilde{a}_2])$ was already intersecting $\tilde{\tilde{\gamma}}_{\sigma}([\tilde{a}_3,\tilde{a}_4])$. We define $\xi_0 = \tfrac{1}{2}\min\xi_{a_1,a_2}$ where the minimum is taken over all adjacent pairs $a_1,a_2 \in P$ with images lying on a common side of $\mathcal{G}$.
	 Then let  $\xi = \min\{\xi_0, \epsilon\}$ and define $S_{a_1,a_2}$ as the generalised segment from $\tilde{\tilde{\gamma}}_{\sigma}(\tilde{a}_1)$ to $\tilde{\tilde{\gamma}}_{\sigma}(\tilde{a}_2)$ with the chosen $\xi$.	

	It is very easy to check that any pair of the thusly defined paths $S_{a_1,a_2}$ and $S_{a_3,a_4}$ ,for $a_1,a_2$ and $a_3,a_4$ pairs of adjacent points of $P$, intersect each other if and only if $\tilde{\tilde{\gamma}}_{\sigma}([\tilde{a}_1\tilde{a}_2])$ intersects $\tilde{\tilde{\gamma}}_{\sigma}([\tilde{a}_3\tilde{a}_4])$. Further, any such pair can have at most one intersection for both sets of paths. 
	
	Now we are in a position to define the map $\gamma_{\sigma}$ on $\tilde{\Gamma}$. We define $\gamma_{\sigma}(a) = \tilde{\tilde{\gamma}}_{\sigma}(\tilde{a})$ for all $a\in P$ and for all the corresponding $\tilde{a}$. 
	Further, for every vertex $V$, intersection of a vertical and horizontal line of $\tilde{\Gamma}$, there exists exactly two pairs of adjacent $a_1,a_2\in P$ (on the horizontal line) and $a_3,a_4 \in P$ (on the vertical line) such that $V = [a_1a_2]\cap[a_3a_4]$ and no other points $a \in P$ are included in $ [a_1a_2]\cup[a_3a_4]$. 
	
	 Then, by construction, there exists exactly one point of intersection call it $\mathbf{V}$ in the set $S_{a_1,a_2}\cap S_{a_3,a_4}$ and we define $\gamma_{\sigma}(V) = \mathbf{V}$. Thus we have separated the grid $\tilde{\Gamma}$ into simple segments lying between adjacent intersections with $\mathcal{G}$, intersecting segments of $\tilde{\Gamma}$ or a combination of the two. In each case there is a clear correspondence between the endpoints of remaining segments in $\tilde{\Gamma}$ and (parts of the possibly generalized) segments defined in the previous paragraph. We parametrize these segments (or possibly paths consisting of 2 segments) by arc length from the corresponding segments in $\tilde{\Gamma}$.
	 
	 Thus we obtain a continuous injective piecewise linear mapping $\gamma_{\sigma}:\tilde{\Gamma}\to Q(0,1)$ satisfying $\|\gamma - \gamma_{\sigma}\|_{L^\infty(\tilde{\Gamma})}\leq 4\eta \leq \eps 4(K+1)^{-1}$.

\step{5}{Basic length estimates on $\tilde{\Gamma}$}{basic}

	We claim that for any pair of points $a,b \in [ab]\subset \tilde{\Gamma}$ we have that
	\begin{equation}\label{BasicGen}
		|D_{\tau}\gamma_{\sigma}|([ab]) \leq (1+\xi)(1+\epsilon) \left(|D_{\tau}\gamma|([ab]) + 8\frac{\epsilon}{K+1} \right).
	\end{equation}
	Indeed it holds that $|\gamma(a) - \gamma_{\sigma}(a)|\leq 3\sigma \leq \tfrac{1}{2}\epsilon\delta$ for each $a\in P$. For any pair $a, a' \in P$ adjacent on a segment of $\tilde{\Gamma}$ we have that either $\gamma(a) = \gamma(a')$ or $|\gamma(a)- \gamma(a')| \geq \delta$ (this is immediate from the definition of $\delta$). In the first case we know that the length of the curve given by $\gamma$ on $[aa']$ is at least $100\sigma$ by \eqref{Marathon}, $\epsilon<\tfrac{1}{100}$, and $\sigma\leq \epsilon^2 \delta \leq \epsilon \sigma_0$. On the other hand we have $|\gamma_{\sigma}(a)- \gamma_{\sigma}(a')| < 6\sigma$ and so the length of $S_{aa'}$ is at most $(1+\xi)6\sigma \leq 12\sigma < 100\sigma$. Therefore on such segments we in fact have that $|D_{\tau}\gamma_{\sigma}|([aa']) < |D_{\tau}\gamma|([aa'])$.
	
	In the second case we have that $|\gamma(a)- \gamma(a')| \geq \delta$ and, by \eqref{SigmaDef}, that $|\gamma_{\sigma}(a) - \gamma(a)| < 3\sigma\leq  \epsilon\delta$. We can estimate by the triangle inequality that
	$$
		\begin{aligned}
			|\gamma_{\sigma}(a) - \gamma_{\sigma}(a')|
			&\leq |\gamma_{\sigma}(a) - \gamma(a)|+|\gamma(a) - \gamma(a')|+|\gamma(a') - \gamma_{\sigma}(a')|\\
			& \leq |\gamma(a) - \gamma(a')| + 6\sigma\\
			& \leq (1+\epsilon)|\gamma(a) - \gamma(a')|.
		\end{aligned}
	$$
	Now, because the length of the generalized segment between $\mathbf{a}$ and $\mathbf{b}$ with parameter $\xi$ has length bounded by $(1+\xi)|\mathbf{a} - \mathbf{b}|$, we get that
	\begin{equation}\label{BasicA}
		|D_{\tau}\gamma_{\sigma}|([aa']) \leq (1+\xi)(1+\epsilon)|D_{\tau}\gamma|([aa'])
	\end{equation}
	for any $a,a'\in P$ adjacent on a segment in $\tilde{\Gamma}$ and hence also immediately for any $a,a'\in P$ lying on a segment of $\tilde{\Gamma}$ but not necessarilly adjacent.

	The argument for a general pair $a,b$ both lying on a single segment of $\tilde{\Gamma}$ is as follows. Assume that there is at least one $a'$ lying on $[ab]$. Then the estimate \eqref{BasicA} takes care of the maximal segment lying  between two points of $P$ fully contained in $[ab]$ leaving only two end bits. Observe, that the image curve of each end bit is contained in a rectangle $R$ in $\mathcal{G}$ whose diameter is bounded by $4\frac{\epsilon}{K+1}$. Then the generalized segment image of the end bits must also be bounded by $(1+\xi)4\frac{\epsilon}{K+1}$. There are 2 such end bits, hence the `$8\frac{\epsilon}{K+1}$'-term in the estimate in \eqref{BasicGen}.

\step{6}{Upper bounds on geodesics in $\gamma_{\sigma}(R)$}{MeasureMe}

	Having chosen an injective $\gamma_{\sigma}$ on $\tilde{\Gamma}$ we intend to use the minimal extension theorem, Theorem~\ref{MinExt} of \cite{PR1} on rectangles $R$ of $\Gamma$ with boundary datum given by $\gamma_{\sigma}$ restricted to $\Gamma$. In order to bound the total variation of the extension given by the theorem, it is necessary to bound the length of geodesics between the images of opposing points. We are able to do this (at least for a large number of points) thanks to our choice of the guidelines.
	
	Let $R = [x_{j-1},x_j]\times[y_{m-1},y_m]$ be a rectangle of $\Gamma$. We consider points on opposing vertical sides but the argument for horizontal sides is analogous. Therefore let $(x_{j-1},y) \in J_{R,i}^-$ and $(x_j,y)$ be its opposing point in $J_{R,i}^+$. We recall that $[-1,1]\times\{t_{R,i}\}\subset\tilde{\Gamma}$ is a guideline for $R$. The segments $J_{R,i}^{\pm}$ were chosen so that $|D_2f|(J_{R,i}^{\pm}) < 2\frac{\epsilon}{K+1}$ and so using \eqref{BasicGen} we get that
	$$
		\begin{aligned}
			|D_2\gamma_{\sigma}([(x_{j-1},y),(x_{j-1},t_{R,i})])| 
			&\leq (1+\xi)(1+\epsilon)\left(|D_2\gamma|([(x_{j-1},y),(x_{j-1},t_{R,i})])+8\frac{\epsilon}{K+1}\right)\\
			&\leq (1+\xi)(1+\epsilon)\frac{10\epsilon}{K+1}.
		\end{aligned}
	$$
	Similarly $|D_2\gamma_{\sigma}([(x_{j},y),(x_{j},t_{R,i})])| \leq (1+\xi)(1+\epsilon)\frac{10\epsilon}{K+1}$. But by \eqref{BasicA} we estimate
	$$
		\begin{aligned}
			|D_1\gamma_{\sigma}([(x_{j-1},t_{R,i}),(x_{j},t_{R,i})])| 
			&\leq (1+\xi)(1+\epsilon)|D_1\gamma|([(x_{j-1},t_{R,i}),(x_{j},t_{R,i})])\\
			&= (1+\xi)(1+\epsilon)|D_1f|([(x_{j-1},t_{R,i}),(x_{j},t_{R,i})]).
		\end{aligned}
	$$
	Thanks to \eqref{GlDefH}, for almost all $y$ such that $(x_{j-1},y) \in J_{R,i}^-$ and $(x_{j},y)\in J_{R,i}^+$ follows that 
	$$
		|D_1f|([(x_{j-1},t_{R,i}),(x_{j},t_{R,i})]) \leq |D_1f|([(x_{j-1},y),(x_{j},y)]) + \frac{\epsilon}{K+1}.
	$$
	Gathering together these estimates, we deduce that for almost all pairs $(x_{j-1},y), (x_{j},y)$ in $J_{R,i}^{\pm}$ there is a path in $\tilde{\Gamma}$ (call it $Z$) connecting $(x_{j-1},y)$ with $(x_{j},y)$ which has length
	$$
		|D_{\tau}\gamma_{\sigma}|(Z) \leq (1+\xi)(1+\epsilon)\Big(|D_1f|([(x_{j-1},y),(x_{j},y)]) +21\frac{\epsilon}{K+1}\Big).
	$$

Observe, that the curve $\gamma_{\sigma}(Z)$ is contained in the polygon $\mathcal{P}$ identified by the piecewise linear Jordan curve $\gamma_{\sigma}(\partial R)$. In particular we have $\partial \mathcal{P} = \gamma_{\sigma}(\partial R)$.
Moreover, $\gamma_\sigma (x_{j-1},y)$ and $\gamma_\sigma (x_{j},y)$ are the endpoints of $\gamma_{\sigma}(Z)$ and are contained in $\gamma_{\sigma}(\partial R)$. As a consequence, the length of the shortest curve contained in $\mathcal{P}$ and connecting the same endpoints is smaller than the length of $\gamma_{\sigma}(Z)$. 
As in Theorem~\ref{MinExt}, given $a,b \in \partial \mathcal{P}$ we denote by $d_{\mathcal{P}}(a,b)$ the length of the geodesic laying in $\mathcal{P}$ and connecting $a$ and $b$. Then the above comment can be rephrased as 
	
	\begin{equation}\label{DobryOdhad}
		d_{\mathcal{P}}(\gamma_{\sigma}(x_{j-1},y), \gamma_{\sigma}(x_{j},y)) \leq 	(1+\xi)(1+\epsilon)\Big(|D_1f|([(x_{j-1},y),(x_{j},y)]) + 21\frac{\epsilon}{K+1}\Big)
	\end{equation}
	for almost every $[(x_{j-1},y),(x_{j},y)]$ with endpoints in $J_{R,i}^{\pm}$. For vertically opposing points we have the same.

	We conclude this step with an estimate for $d_{\mathcal{P}}(\gamma_{\sigma}(x_{j-1},y),\gamma_{\sigma}(x_{j},y))$ for points $(x_{j-1},y), (x_{j},y) \in E$. In this case we simply estimate that
	\begin{equation}\label{DebilniOdhad}
	d_{\mathcal{P}}(\gamma_{\sigma}(x_{j-1},y),\gamma_{\sigma}(x_{j},y)) \leq |D_{\tau}f|(\partial R) \leq |D_{\tau}f|(\Gamma).    \quad 
	\end{equation}
	and similarly for $(x,y_{m-1}), (x,y_m) \in E$.

\step{7}{Extension and variation estimates}{Final}

For each $[x_{j-1},x_{j}]\times[y_{m-1},y_m] = R$ of $\Gamma$ we have a polygon $\Omega_R$ whose boundary is paramterized by the piecewise linear map $\gamma_\sigma |_{\partial R}$. Then, by Theorem~\ref{MinExt} applied to $\gamma_{\sigma}|_{\partial R}$, we have a homeomorphism $h_R:R\to \Omega_R$ extending $\gamma_\sigma$ such that 
\begin{equation}\label{HomeoVarEst}
	\begin{aligned}
		&\int_{y_{m-1}}^{y_m}|D_1 h_R|((x_{j-1},x_{j})\times  \{t\})\,dt+\int_{x_{j-1}}^{x_{j}}|D_2 h_R|( \{t\} \times (y_{m-1}, y_m))\,dt\\
		\leq &\int_{y_{m-1}}^{y_m} d_{\mathcal{P}}(\gamma_{\sigma}(x_{j-1},t),\gamma_{\sigma}(x_{j},t))\,dt +\int_{x_{j-1}}^{x_{j}} d_{\mathcal{P}}(\gamma_{\sigma}(t,y_{m-1}),\gamma_{\sigma}(t,y_m))\,dt\\
		&\quad+\frac{\epsilon}{K+1}.
	\end{aligned}
\end{equation}

We  define $h:Q(0,1)\to Q(0,1)$ by setting $h|_R=h_R$ for all $R$ in the grid 
$\Gamma.$ By the choice of the boundary values of $h_R$ it is clear that $h$ is 
a homeomorphism.

Let us estimate $\|h-f\|_{L^1(Q(0,1))}$. For every $(x_j,y_m)$ vertex of $\Gamma$ it holds that $h(x_j,y_m)$ lies in the same rectangle of $\G$ as does $f(x_j,y_m)$ (we take $f(x_j,y_m)$ as the unique value such that $f$ is continuous at $(x_j,y_m)$ w.r.t. $\Gamma$). The diameter of this rectangle is at most $4\epsilon K^{-1}$. Therefore $|h(x_j,y_m) - f(x_j, y_m)|<C\epsilon$. We have that $\diam h(R) \leq \diam f(R) + C\epsilon K^{-1}$ because $h(\partial R) = \gamma_{\sigma}(\partial R) \subset \gamma(\partial R) + B(0,4\epsilon K^{-1})$. 
Combining the above facts, for any rectangle on which $\diam f(\partial R) \leq \epsilon$ we have $\|f-h\|_{L^\infty(R)}<C\epsilon$. On the other hand, there are at most $\left \lfloor{8(K+1)^2\epsilon}\right \rfloor$ \emph{bad} rectangles where $\diam f(\partial R)>\epsilon$ (see \eqref{SmallOscillation} and \eqref{BigOscillation}). From \eqref{BigOscillation} we can still estimate $\int_R|f-h| \leq 4\mathcal{L}^2(R)$ whenever $R$ is one such \emph{bad} rectangle. By calling then $\Sigma  = \bigcup_{R \, \emph{bad}} R$ and using \eqref{Theatre}, we deduce
$$
	\int_{\Sigma}|f-h| \leq 4\mathcal{L}^2(\Sigma) \leq C\epsilon.
$$
 Therefore
 $$
 \|h-f\|_{L^1(Q(0,1))} \leq  \sum_{R \in Q(0,1) \setminus \Sigma} \|h-f\|_{L^\infty(R)} + \|h-f\|_{L^1(\Sigma)} \leq 2C\epsilon.
 $$

Recalling the notation introduced in Step 2, we have the following estimate for $|D_1 h|(Q(0,1))$ ($\abs{D_2 h}$ is estimated analogously).
\begin{equation}
	\begin{aligned}
  		\sum_{j,m=1}^{K+1}&\int_{y_{m-1}}^{y_m}|D_1 h|((x_{j-1},x_j)\times\{t\}) \, dt\\ =&\sum_{j,m=1}^{K+1}\int_{\{t\in[y_{m-1}, y_{m}]\colon (x_j,t) \notin E \}}|D_1 h|((x_{j-1},x_j)\times\{t\}) \, dt\\
  		&+ \sum_{j,m=1}^{K+1}\int_{\{t\in[y_{m-1}, y_{m}]\colon (x_j,t)\in E\}}|D_1 h|((x_{j-1},x_j)\times\{t\}) \, dt\\
  		=:&\,I\,+\,II.
	\end{aligned}
\end{equation}
We start with $II$. For each $[x_{j-1},x_j]\times[y_{m-1},y_m] = R$ and for each $t \in [y_{m-1}, y_{m}]$ such that $(x_{j-1},t)$ and $(x_j , t)$ belong to $E$ we estimate by \eqref{DebilniOdhad}. But then (recall the definition of $E$ especially that $\mathcal{H}^1(E) \leq \frac{\epsilon}{(K+1)^2|D_{\tau}f|(\Gamma)}$) we have that
$$
	\int_{\{t\in[y_{m-1}, y_{m}]\colon (x_j,t)\in E_R \}}|D_1 h|((x_{j-1},x_j)\times\{t\}) \, dt \leq \frac{\epsilon}{(K+1)^2} 
$$
and summing over $j,m=1,\ldots, K+1$ we get 
\begin{equation}\label{BSolved}
	II \leq \epsilon.
\end{equation}

For $I$ we use \eqref{HomeoVarEst} and \eqref{DobryOdhad} (recall $\xi\leq \epsilon$) to obtain
$$
	\begin{aligned}
		I&\leq \sum_{j,m=1}^{K+1} \int_{y_{m-1}}^{y_m} d_{\mathcal{P}}(\gamma_{\sigma}(x_{j-1},t), \gamma_{\sigma}(x_j,t))+\frac{\epsilon}{K+1} \,dt \\ 
		&\leq \sum_{j,m=1}^{K+1}\int_{y_{m-1}}^{y_m}(1+\epsilon)^2\Big(|D_1f|([(x_{j-1},t)(x_j,t)]) + 21\frac{\epsilon}{K+1}\Big) +\frac{\epsilon}{K+1} \,dt \\
		&\leq \sum_{j=1}^{K+1} \int_{-1}^{1}(1+\epsilon)^2\Big(|D_1f|([(x_{j-1},t)(x_j,t)]) + 21\frac{\epsilon}{K+1}\Big) +\frac{\epsilon}{K+1} \,dt \\
		& \leq (1+\epsilon)^2\Big(|D_1 f|(Q(0,1))+42\epsilon\Big)+2\epsilon\\
		&\leq  |D_1 f|(Q(0,1) + 3\epsilon|D_1 f|(Q(0,1) +86\epsilon.
	\end{aligned}
$$
Thus we have shown that for every $\epsilon>0$ there exists a homeomorphism $h$ such that $\|f-h\|_{L^1(Q(0,1))} < C\epsilon$ and $|D_1 h|(Q(0,1))\leq |D_1 f|(Q(0,1))+ C\epsilon$ and similarly $|D_2 h|(Q(0,1))\leq |D_2 f|(Q(0,1))+ C\epsilon$.

By compactness we may form a sequence $h_j$ of such homeomorphisms that weak-* converges to $f.$ From Reshetnyak's lower semicontinuity theorem \cite[2.38]{AFP}  
and the construction of $h_j$ we see that 
\begin{equation*}
 \begin{aligned}
\limsup_{j\rightarrow \infty}& |D_1 h_j|(Q(0,1))+|D_2 h_j|(Q(0,1))\leq |D_1 f|(Q(0,1))+|D_2 f|(Q(0,1)) \\&\leq\liminf_{j\rightarrow \infty} |D_1 h_j|(Q(0,1))+|D_2 h_j|(Q(0,1)).
  \end{aligned}
\end{equation*}
That is, $h_j$ converges to $f$ in the sense of \eqref{Manhattan strict convergence}. 
\end{proof}



\end{document}